\newtheorem{theorem}{Theorem}[section]
\newtheorem{lemma}[theorem]{Lemma}
\newtheorem{corollary}[theorem]{Corollary}
\newtheorem{proposition}[theorem]{Proposition}
\newtheorem{question}[theorem]{Question}
\theoremstyle{definition}
\theoremstyle{notation}
\newtheorem{example}[theorem]{Example}
\theoremstyle{remark}
\newtheorem{remark}[theorem]{Remark}
\numberwithin{equation}{section}
\begin{document}

\title[Embedding measure spaces]{Embedding measure spaces}

\author[M.R. Koushesh]{M.R. Koushesh}

\address{Department of Mathematical Sciences, Isfahan University of Technology, Isfahan 84156-83111, Iran.}
\address{School of Mathematics, Institute for Research in Fundamental Sciences (IPM), P.O. Box: 19395-5746, Tehran, Iran}
\email{koushesh@cc.iut.ac.ir}

\thanks{This research was in part supported by a grant from IPM (No. 90030052).}

\subjclass[2010]{54D35, 54D60, 28A05, 28C15.}

\keywords{Ultrafilter, thick subset, set of full outer measure, topological measure space, Baire measure, Stone--\v{C}ech compactification, realcompactification}

\begin{abstract}
For a given measure space $(X,{\mathscr B},\mu)$ we construct all measure spaces $(Y,{\mathscr C},\lambda)$ in which  $(X,{\mathscr B},\mu)$ is embeddable. The construction is modeled on the ultrafilter construction of the Stone--\v{C}ech compactification of a completely regular topological space. Under certain conditions the construction simplifies. Examples are given when this simplification occurs.
\end{abstract}

\maketitle

\section{Introduction}

A measurable  space  $(X,{\mathscr B})$ is said to be  {\em embedded} in a measurable  space $(Y,{\mathscr C})$ (denoted by $(X,{\mathscr B})\subseteq(Y,{\mathscr C})$)  if  $X\subseteq Y$ and
\[{\mathscr B}=\{C\cap X:C\in{\mathscr C}\}.\]
A measure space $(X,{\mathscr B}, \mu)$ is said to be  {\em embedded} in a measure space $(Y,{\mathscr C},\lambda)$ (denoted by $(X,{\mathscr B}, \mu)\subseteq(Y,{\mathscr C},\lambda)$) if  $(X,{\mathscr B})\subseteq(Y,{\mathscr C})$ and $\mu(C\cap X)=\lambda(C)$ for each $C\in {\mathscr C}$. In this note,  for a given measure space $(X,{\mathscr B}, \mu)$, we construct all measure spaces $(Y,{\mathscr C}, \lambda)$ in which  $(X,{\mathscr B}, \mu)$ is embedded. Equivalently, for a given measure space $(X,{\mathscr B}, \mu)$, we construct all measure spaces $(Y,{\mathscr C}, \lambda)$ which contains $(X,{\mathscr B}, \mu)$ as a thick subspace. (Recall that a subset $X$ of a measure space $(Y,{\mathscr C}, \lambda)$ is said to be {\em thick} (or {\em of full outer measure}) if $\lambda(C)=0$ for each $C\in {\mathscr C}$ such that $C\subseteq Y\backslash X$, equivalently,  if $\lambda_*(Y\backslash X)=0$, where $\lambda_*$ denotes the inner measure induced by $\lambda$. If $X$  is a thick subset of $(Y,{\mathscr C}, \lambda)$ and if
\[{\mathscr B}=\{C\cap X:C\in {\mathscr C}\}\]
and $\mu(C\cap X)=\lambda(C)$ for each $C\in {\mathscr C}$, then $(X,{\mathscr B}, \mu)$ is a measure space which is embedded in $(Y,{\mathscr C}, \lambda)$. Conversely, if $(X,{\mathscr B}, \mu)$ is embedded in $(Y,{\mathscr C}, \lambda)$, then $X$ is a thick subset of $(Y,{\mathscr C}, \lambda)$; see e.g. Theorem 17.A of \cite{H}.) Our construction here is analogous to the ultrafilter construction of the Stone--\v{C}ech compactification of a completely regular topological space $X$. (Completely regular topological spaces are always assumed to be Hausdorff.)

We recall some basic facts, definitions and notation. For details we refer the reader to \cite{E}, \cite{GJ} and \cite{H}.  Let  $(X,{\mathscr B})$ be  a measurable  space. A non-empty ${\mathscr A}\subseteq{\mathscr B}$ is called a {\em filter-base}  in  ${\mathscr B}$ if for every $A,B\in {\mathscr A}$ there exists a non-empty $C\in  {\mathscr A}$ such that $C\subseteq A\cap B$. A {\em filter}  ${\mathscr F}$  in  ${\mathscr B}$ is a filter-base such that $B\in{\mathscr F}$ whenever $B\in{\mathscr B}$ and $F\subseteq B$ for some $F\in {\mathscr F}$. An {\em ultrafilter}  in  ${\mathscr B}$ is a maximal (with respect to $\subseteq$) filter. An ultrafilter is called {\em free} if it has empty intersection, otherwise, it is called {\em fixed}. An ultrafilter is said to have  {\em the countable intersection property} ({\em c.i.p.}, in short) if every countable number  of its elements has a non-empty intersection.
It is known that every filter-base in ${\mathscr B}$ is contained in some ultrafilter  in  ${\mathscr B}$, and that a filter-base   ${\mathscr A}$ in  ${\mathscr B}$ is an ultrafilter if and only if for each $B\in {\mathscr B}$  if $B$ meets every element of  ${\mathscr A}$ then $B\in  {\mathscr A}$. Note that an ultrafilter  ${\mathscr F}$ in  ${\mathscr B}$ has c.i.p. if and only if it is {\em $\sigma$-complete}, i.e.,  it is closed under countable intersections.

Let $X$ be a topological space. By a {\em zero-set} in $X$ we mean a set of the form $f^{-1}(0)$ where $f:X\rightarrow [0,1]$ is continuous; the complement of a zero-set is called a {\em cozero-set}; denote $\mathrm{Z}(f)= f^{-1}(0)$ and $\mathrm{Coz}(f)=X\backslash\mathrm{Z}(f)$. Let ${\mathscr Z}(X)$ and $\mathrm{Coz}(X)$ denote the set of all zero-sets and the set of all cozero-sets of $X$, respectively. Let $X$ be a completely regular topological  space.  A {\em compactification} of $X$ is a compact Hausdorff topological space which contains $X$ as a dense subspace. We denote by $\beta X$ the {\em Stone--\v{C}ech compactification} of $X$, which always exists, and is characterized by either of the following properties:
\begin{itemize}
\item Every continuous function from $X$ to a compact space is continuously extendible over $\beta X$.
\item Every continuous function from $X$ to $[0,1]$ is continuously extendible over $\beta X$.
\item For every $Z,S\in {\mathscr Z}(X)$ such that $Z\cap S=\emptyset$ we have
\[\mathrm{cl}_{\beta X}Z\cap\mathrm{cl}_{\beta X}S=\emptyset.\]
\item For every $Z,S\in {\mathscr Z}(X)$ we have
\[\mathrm{cl}_{\beta X}(Z\cap S)=\mathrm{cl}_{\beta X}Z\cap\mathrm{cl}_{\beta X}S.\]
\end{itemize}
Note, in particular, this implies that disjoint zero-sets (and thus disjoint closed-open subsets)  in $X$  have disjoint closures in $\beta X$. For a completely regular topological space $X$ the {\em Hewitt realcompactification} $\upsilon X$ of $X$ is the subspace of $\beta X$ defined by
\[\upsilon X=\bigcap\big\{C:C\in\mathrm{Coz}(\beta X)\mbox{ and }X\subseteq C\big\}.\]
A topological space is said to be {\em realcompact} if it is homeomorphic to a closed subspace of some topological  product of the real line. Every regular Lindel\"{o}f topological space is realcompact.  It is known that a completely regular topological space $X$  is realcompact if and only if $X=\upsilon X$ if and only if for every $p\in \beta X\backslash X$ there exists a zero-set $Z$ in $\beta X$ such that $p\in Z$ and $Z\cap X=\emptyset$.

A {\em topological measurable space} is a triple $(X,{\mathscr O},{\mathscr B})$ where $(X,{\mathscr B})$  is a measurable  space and  $(X,{\mathscr O})$ is a topological space such that ${\mathscr O}\subseteq {\mathscr B}$, i.e., every open set (and thus every Borel set) is measurable.

This note is organized as follows. In Section 2 we construct all measure spaces $(Y,{\mathscr C},\lambda)$ in which a given  measure space $(X,{\mathscr B},\mu)$ is embedded.  In Section 3 we simplify the construction under certain additional conditions on $(X,{\mathscr B},\mu)$. Indeed, we prove that if the points of $X$ are separated by measurable sets in ${\mathscr B}$ and there is no free ultrafilter in ${\mathscr B}$ with c.i.p., then $(X,{\mathscr B}, \mu)$ is embeddable in   $(Y,{\mathscr C}, \lambda)$ if and only if  $(Y,{\mathscr C}, \lambda)$ is obtained from  $(X,{\mathscr B}, \mu)$ by ``blowing" certain points of $X$ up and  ``pasting"  a certain measurable space to $X$ in a certain way. In Section 4 we provide examples satisfying the assumption of the theorem in Section 3, i.e., we find examples of measure spaces $(X,{\mathscr B}, \mu)$ with no free ultrafilter in  ${\mathscr B}$ having  c.i.p. It turns out that the class of such measure spaces  $(X,{\mathscr B}, \mu)$  is reasonably large (e.g., it contains the class of all first-countable realcompact topological measure spaces, thus in particular, containing all $n$-dimensional  Lebesgue  measure spaces) and behaves very nicely in connection with the standard operations on measure spaces (e.g., we show that for any $\sigma$-finite measure spaces  $(X, {\mathscr B}, \mu)$ and $(Y, {\mathscr C}, \lambda)$ such that in each of which singletons are measurable, considering the measure space $(X\times Y, {\mathscr B}\times{\mathscr C}, \mu\times\lambda)$, there is no free ultrafilter  in ${\mathscr B}\times{\mathscr C}$  with  c.i.p. if and only if  there is a free ultrafilter  with c.i.p. neither in ${\mathscr B}$ nor in ${\mathscr C}$.) Finally, in Section 5 we give examples of measure spaces  $(X, {\mathscr B}, \mu)$  having arbitrarily large number of free ultrafilter  in ${\mathscr B}$  with  c.i.p. We leave some problems open which are formally stated.

\section{The construction of measure spaces in which a given  measure space $(X,{\mathscr B},\mu)$ is embeddable}

The following lemma is well known.

\begin{lemma}\label{LJHG}
Let $(X,{\mathscr B})$ be a measurable space. Let ${\mathscr U}$ be an ultrafilter in ${\mathscr B}$.
\begin{itemize}
\item[\rm(1)] For any $B\in{\mathscr B}$ either $B\in{\mathscr U}$ or $X\backslash B\in{\mathscr U}$.
\item[\rm(2)] Suppose that ${\mathscr U}$ has c.i.p. and $B_1,B_2,\ldots\in{\mathscr B}$. Then
\begin{equation}\label{GGUJ}
\bigcup_{n=1}^\infty B_n\in {\mathscr U}
\end{equation}
if and only if $B_n\in {\mathscr U}$ for some $n\in \mathbb{N}$.
\end{itemize}
\end{lemma}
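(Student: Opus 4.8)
The plan is to reduce everything to the filter-base characterization of ultrafilters recalled in the excerpt, namely that a filter-base $\mathscr{A}$ in $\mathscr{B}$ is an ultrafilter if and only if every $B\in\mathscr{B}$ that meets each element of $\mathscr{A}$ already belongs to $\mathscr{A}$, together with the upward-closure enjoyed by any filter and the fact that c.i.p.\ is equivalent to $\sigma$-completeness.

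For part (1), I would argue by cases according to whether $B$ meets every element of $\mathscr{U}$. If it does, then $B\in\mathscr{U}$ by the characterization and we are finished. If it does not, there is some $U\in\mathscr{U}$ with $B\cap U=\emptyset$, so $U\subseteq X\backslash B$; since $X\backslash B\in\mathscr{B}$ and $\mathscr{U}$, being a filter, is upward-closed, this forces $X\backslash B\in\mathscr{U}$. One may note in passing that $B$ and $X\backslash B$ cannot both lie in $\mathscr{U}$, since a filter-base contains no two disjoint members, but the statement only requires the inclusive disjunction.

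For part (2), the ``if'' direction is immediate: if $B_n\in\mathscr{U}$ for some $n$, then $B_n\subseteq\bigcup_{k=1}^\infty B_k$ and upward-closure yields \eqref{GGUJ}. The substantive direction is the converse, which I would prove by contradiction. Suppose $\bigcup_{n=1}^\infty B_n\in\mathscr{U}$ yet $B_n\notin\mathscr{U}$ for every $n$. By part (1), $X\backslash B_n\in\mathscr{U}$ for all $n$, and since $\mathscr{U}$ has c.i.p.\ it is $\sigma$-complete, whence
\[\bigcap_{n=1}^\infty(X\backslash B_n)=X\backslash\bigcup_{n=1}^\infty B_n\in\mathscr{U}.\]
Thus $\mathscr{U}$ contains both $\bigcup_{n=1}^\infty B_n$ and its complement, two disjoint sets, contradicting the filter-base property. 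Hence $B_n\in\mathscr{U}$ for some $n$.

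The whole argument is elementary; the only point requiring care is the converse in part (2), where the crux is to combine the complementation supplied by part (1) with $\sigma$-completeness, so that the union lying in $\mathscr{U}$ clashes with its complement also lying in $\mathscr{U}$. No step presents a genuine obstacle beyond keeping track of which closure property---upward-closure versus closure under countable intersections---is invoked at each stage.
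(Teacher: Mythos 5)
Your proof is correct. Part (1) is essentially identical to the paper's argument: from $B\notin\mathscr{U}$ one extracts $U\in\mathscr{U}$ disjoint from $B$, notes $U\subseteq X\backslash B$, and applies upward closure. In part (2) the ``if'' direction coincides, but your converse takes a slightly different route from the paper's. You invoke part (1) to put every $X\backslash B_n$ into $\mathscr{U}$, then use the equivalence of c.i.p.\ with $\sigma$-completeness (closure under countable intersections, which the paper recalls as known in the introduction) together with De Morgan to place $X\backslash\bigcup_{n=1}^\infty B_n$ in $\mathscr{U}$, and derive the contradiction from $\mathscr{U}$ containing two disjoint members. The paper instead works only with the bare c.i.p.\ (non-emptiness of countable intersections): it picks, for each $n$, a witness $U_n\in\mathscr{U}$ with $B_n\cap U_n=\emptyset$ and computes directly that $\bigcap_{i=1}^\infty U_i\cap\bigcup_{n=1}^\infty B_n=\emptyset$, an empty countable intersection of ultrafilter members. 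Your version is marginally less self-contained, since it leans on the c.i.p.--$\sigma$-completeness equivalence (itself proved by the same kind of meeting argument), while the paper's is a one-line computation from first principles; the mathematical content is the same and both are complete.
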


\begin{proof}
To show (1), note that if $B\notin{\mathscr U}$ for some $B\in{\mathscr B}$, then, since ${\mathscr U}$ is an ultrafilter, we have $B\cap U=\emptyset$ for some $U\in{\mathscr U}$. Thus $U\subseteq X\backslash B$, which implies that $X\backslash B\in {\mathscr U}$.

To show (2), note that (\ref{GGUJ}) holds trivially if $B_n\in {\mathscr U}$ for some $n\in\mathbb{N}$. To show the converse, suppose that (\ref{GGUJ}) holds, while  $B_n\notin {\mathscr U}$ for each $n\in\mathbb{N}$. For each $n\in\mathbb{N}$ (since ${\mathscr U}$ is an ultrafilter) there exists some $U_n\in {\mathscr U}$ such that $B_n\cap U_n=\emptyset$.  Now
\[\bigcap_{i=1}^\infty U_i\cap\bigcup_{n=1}^\infty B_n=\bigcup_{n=1}^\infty\Big(\bigcap_{i=1}^\infty U_i\cap B_n\Big)\subseteq\bigcup_{n=1}^\infty(U_n\cap B_n)=\emptyset\]
contradicting  the fact that ${\mathscr U}$ has c.i.p.
\end{proof}

\begin{theorem}\label{FDDDB}
Let $(X,{\mathscr B},\mu)$ be a measure space. Then $(Y,{\mathscr C},\lambda)$ is a measure space in which $(X,{\mathscr B},\mu)$ is embedded if and only if
there exists a measurable space $(Z,{\mathscr D})$, a collection $\{{\mathscr D}_B:B\in{\mathscr B}\}$ of non-empty subsets of ${\mathscr D}$ such that
\begin{itemize}
\item[\rm(1)] $\emptyset\in{\mathscr D}_\emptyset$;
\item[\rm(2)] if $B\in{\mathscr B}$, then
\[\{Z\backslash D:D\in {\mathscr D}_B\}\subseteq{\mathscr D}_{X\backslash B};\]
\item[\rm(3)] if $B_1,B_2,\ldots\in {\mathscr B}$, then
\[\Big\{\bigcup_{n=1}^\infty D_n :D_n\in {\mathscr D}_{B_n}\Big\}\subseteq{\mathscr D}_{\;\bigcup_{n=1}^\infty B_n};\]
\end{itemize}
and a collection $\{S_{\mathscr U}:{\mathscr U}\in {\mathbb U}\}$ of pairwise disjoint non-empty sets, bijectively indexed by a collection ${\mathbb U}$ of ultrafilters in ${\mathscr B}$ with c.i.p., where the sets $X$, $S_{\mathscr U}$ for any ${\mathscr U}\in {\mathbb U}$, and $Z$ are pairwise disjoint, such that
\[Y=X\cup\bigcup_{{\mathscr U}\in{\mathbb U}} S_{\mathscr U}\cup Z,\]
\[{\mathscr C}=\Big\{B\cup \bigcup_{B\in {\mathscr U}\in {\mathbb U}} S_{\mathscr U}\cup D:B\in{\mathscr B}\mbox{ and }D\in{\mathscr D}_B\Big\},\]
and $\lambda:{\mathscr C}\rightarrow [0,\infty]$ is given by
\[\lambda\Big(B\cup\bigcup_{B\in{\mathscr U}\in {\mathbb U}} S_{\mathscr U}\cup D\Big)=\mu (B)\]
for each $B\in {\mathscr B}$ and $D\in {\mathscr D}_B$.
\end{theorem}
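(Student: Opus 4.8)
The plan is to prove the two implications separately, in both cases organizing the work around the \emph{trace filter} of a point. For $y\in Y$ set $\mathscr{U}_y=\{C\cap X:C\in\mathscr{C},\ y\in C\}$; since $\mathscr{B}=\{C\cap X:C\in\mathscr{C}\}$ this is a subcollection of $\mathscr{B}$, and a routine check (using only that $\mathscr{C}$ is closed under finite unions, intersections and complements) shows that $\mathscr{U}_y$ is upward closed, closed under finite intersections, and satisfies, for each $B\in\mathscr{B}$, that $B\in\mathscr{U}_y$ or $X\setminus B\in\mathscr{U}_y$. Hence $\mathscr{U}_y$ is either a proper ultrafilter in $\mathscr{B}$ or all of $\mathscr{B}$, the latter occurring exactly when $\emptyset\in\mathscr{U}_y$, i.e. when $y$ lies in some $C\in\mathscr{C}$ with $C\cap X=\emptyset$.

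For the ``if'' direction I would first verify that $\mathscr{C}$ is a $\sigma$-algebra on $Y$. Closure under complementation is obtained by writing, for $C=B\cup\bigcup_{B\in\mathscr{U}}S_{\mathscr{U}}\cup D$,
\[Y\setminus C=(X\setminus B)\cup\bigcup_{X\setminus B\in\mathscr{U}}S_{\mathscr{U}}\cup(Z\setminus D),\]
where the middle term is identified using Lemma \ref{LJHG}(1) and $Z\setminus D\in\mathscr{D}_{X\setminus B}$ comes from property (2); in particular, applying (2) to $\emptyset\in\mathscr{D}_\emptyset$ gives $Z\in\mathscr{D}_X$ and hence $Y\in\mathscr{C}$. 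Closure under countable unions is the step where the countable intersection property is indispensable: for $C_n=B_n\cup\bigcup_{B_n\in\mathscr{U}}S_{\mathscr{U}}\cup D_n$ one computes
\[\bigcup_{n=1}^\infty C_n=\Big(\bigcup_{n=1}^\infty B_n\Big)\cup\bigcup_{\bigcup_n B_n\in\mathscr{U}}S_{\mathscr{U}}\cup\bigcup_{n=1}^\infty D_n,\]
where the identification of the $S_{\mathscr{U}}$-part rests on Lemma \ref{LJHG}(2), so that $\bigcup_n B_n\in\mathscr{U}$ iff some $B_n\in\mathscr{U}$, which holds precisely because each $\mathscr{U}\in\mathbb{U}$ has c.i.p., while the $Z$-part lies in $\mathscr{D}_{\bigcup_n B_n}$ by (3). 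Intersecting any representative with $X$ returns $B$, so $\lambda$ is well defined and equals $\mu(\,\cdot\cap X)$; countable additivity of $\lambda$ then reduces to that of $\mu$ applied to the pairwise disjoint traces $B_n=C_n\cap X$, and the identities $\mathscr{B}=\{C\cap X:C\in\mathscr{C}\}$ and $\mu(C\cap X)=\lambda(C)$ are then immediate.

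For the ``only if'' direction, given an embedding I would set $Z=\bigcup\{C\in\mathscr{C}:C\cap X=\emptyset\}$, which by the dichotomy above is exactly $\{y\in Y\setminus X:\mathscr{U}_y=\mathscr{B}\}$, let $\mathbb{U}=\{\mathscr{U}_y:y\in Y\setminus(X\cup Z)\}$ be the resulting collection of proper trace ultrafilters, and put $S_{\mathscr{U}}=\{y\in Y\setminus(X\cup Z):\mathscr{U}_y=\mathscr{U}\}$; these sets are non-empty, pairwise disjoint, bijectively indexed by $\mathbb{U}$, disjoint from $X$ and from $Z$, and $Y=X\cup\bigcup_{\mathscr{U}\in\mathbb{U}}S_{\mathscr{U}}\cup Z$. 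I would then take $\mathscr{D}=\{C\cap Z:C\in\mathscr{C}\}$ with $\mathscr{D}_B=\{C\cap Z:C\in\mathscr{C},\ C\cap X=B\}$ (each non-empty since $\mathscr{B}$ is the trace of $\mathscr{C}$); properties (1)--(3) follow at once by intersecting $\emptyset$, complements and countable unions in $\mathscr{C}$ with $Z$. Since every $y\in S_{\mathscr{U}}$ has $\mathscr{U}_y=\mathscr{U}$, a set $C\in\mathscr{C}$ meets $S_{\mathscr{U}}$ in all of $S_{\mathscr{U}}$ when $C\cap X\in\mathscr{U}$ and in $\emptyset$ otherwise, which yields the asserted descriptions of $\mathscr{C}$ and, via $\lambda(C)=\mu(C\cap X)$, of $\lambda$.

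The crux of the whole argument, and the step I expect to demand the most care, is verifying that every \emph{proper} trace ultrafilter $\mathscr{U}_y$ automatically has c.i.p., so that $\mathbb{U}$ is legitimately a collection of ultrafilters with c.i.p. This is where countable additivity enters: if $B_1,B_2,\ldots\in\mathscr{U}_y$, choose $C_n\in\mathscr{C}$ with $y\in C_n$ and $C_n\cap X=B_n$; then $\bigcap_n C_n\in\mathscr{C}$ contains $y$ and has trace $\bigcap_n B_n$, so $\bigcap_n B_n=\emptyset$ would force $\emptyset\in\mathscr{U}_y$, contradicting properness. Dually, in the ``if'' direction the same phenomenon is exactly what makes the c.i.p. hypothesis on $\mathbb{U}$ unavoidable: without it Lemma \ref{LJHG}(2) fails and $\mathscr{C}$ need not be closed under countable unions. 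I would also take care to confirm that the points of $Z$ are precisely those absorbed into null sets disjoint from $X$, so that the three-way partition of $Y$ is forced rather than merely available.
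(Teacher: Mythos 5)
Your proposal is correct and takes essentially the same approach as the paper: the same decomposition $Y=X\cup\bigcup_{{\mathscr U}\in{\mathbb U}}S_{\mathscr U}\cup Z$ with $Z$ the union of the members of ${\mathscr C}$ missing $X$, the same trace ultrafilters ${\mathscr U}_y=\{C\cap X:y\in C\in{\mathscr C}\}$ and sets ${\mathscr D}_B=\{C\cap Z:C\in{\mathscr C},\ C\cap X=B\}$, and the same appeal to Lemma \ref{LJHG} to get closure of ${\mathscr C}$ under complements and countable unions in the converse direction. The only difference is cosmetic: you define the trace filter at every point of $Y$ and recover $Z$ from the dichotomy ``proper ultrafilter or all of ${\mathscr B}$,'' whereas the paper carves out $Z$ first and only then introduces ${\mathscr U}_p$ for $p\in(Y\backslash X)\backslash Z$.
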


\begin{proof}
Suppose that $Y$, ${\mathscr C}$ and $\lambda$ are defined as in the statement of the theorem.  We show that $(Y,{\mathscr C}, \lambda)$ is  a measure space in which $(X,{\mathscr B}, \mu)$ is embedded. First, we verify that ${\mathscr C}$ is a $\sigma$-algebra on $Y$. By (1) we have $\emptyset\in{\mathscr C}$. Let $C\in{\mathscr C}$. We show that $Y\backslash C\in{\mathscr C}$. Let
\[C=B\cup \bigcup_{B\in {\mathscr U}\in {\mathbb U}} S_{\mathscr U}\cup D\]
for some  $B\in {\mathscr B}$ and $D\in {\mathscr D}_B$. Note that for each ${\mathscr U}\in {\mathbb U}$ we have $B\notin {\mathscr U}$ if and only if  $X\backslash B\in {\mathscr U}$; this is because if $B\notin {\mathscr U}$ then $X\backslash B\in {\mathscr U}$ by Lemma \ref{LJHG}; the converse is trivial. Therefore
\begin{eqnarray*}
Y\backslash C&=&\Big(X\cup \bigcup_{{\mathscr U}\in {\mathbb U}} S_{\mathscr U}\cup Z\Big)\backslash\Big(B\cup \bigcup_{B\in {\mathscr U}\in {\mathbb U}} S_{\mathscr U}\cup D\Big)\\&=&(X\backslash B)\cup\Big(\bigcup_{{\mathscr U}\in {\mathbb U}} S_{\mathscr U}\backslash\bigcup_{B\in {\mathscr U}\in {\mathbb U}} S_{\mathscr U}\Big)\cup (Z\backslash D)\\&=&(X\backslash B)\cup\bigcup_{B\notin {\mathscr U}\in {\mathbb U}} S_{\mathscr U}\cup (Z\backslash D)\\&=&(X\backslash B)\cup\bigcup_{X\backslash B\in {\mathscr U}\in {\mathbb U}} S_{\mathscr U}\cup (Z\backslash D).
\end{eqnarray*}
By (2) we have $Z\backslash D\in {\mathscr D}_{X\backslash B}$. Thus $Y\backslash C\in {\mathscr C}$. Now, to show that ${\mathscr C}$ is closed under countable unions, let $C_1,C_2,\ldots\in{\mathscr C}$. Then
\[C_n=B_n\cup \bigcup_{B_n\in {\mathscr U}\in {\mathbb U}} S_{\mathscr U}\cup D_n\]
where $B_n\in {\mathscr B}$ and $D_n\in {\mathscr D}_{B_n}$ for each $n\in\mathbb{N}$. Using Lemma \ref{LJHG}, we have
\begin{eqnarray*}
\bigcup_{n=1}^\infty C_n&=&\bigcup_{n=1}^\infty \Big(B_n\cup \bigcup_{B_n\in {\mathscr U}\in {\mathbb U}} S_{\mathscr U}\cup D_n\Big)\\&=&\bigcup_{n=1}^\infty B_n\cup \bigcup_{n=1}^\infty \bigcup_{B_n\in {\mathscr U}\in {\mathbb U}} S_{\mathscr U}\cup \bigcup_{n=1}^\infty D_n\\&=&\bigcup_{n=1}^\infty B_n\cup \bigcup_{\bigcup_{n=1}^\infty B_n\in {\mathscr U}\in {\mathbb U}} S_{\mathscr U}\cup \bigcup_{n=1}^\infty D_n.
\end{eqnarray*}
By (3) we have
\[\bigcup_{n=1}^\infty D_n\in {\mathscr D}_{\;\bigcup_{n=1}^\infty B_n}.\]
Thus
\[\bigcup_{n=1}^\infty C_n\in {\mathscr C}.\]
This shows that ${\mathscr C}$ is a $\sigma$-algebra on $Y$. Next, we show that $\lambda$ is a measure on ${\mathscr C}$. Note that $\lambda(\emptyset)=0$.  If $C_1,C_2,\ldots\in{\mathscr C}$ are disjoint, then using the above results and notation we have
\[\lambda\Big(\bigcup_{n=1}^\infty C_n\Big)=\mu\Big(\bigcup_{n=1}^\infty B_n\Big)=\sum_{n=1}^\infty\mu(B_n)=\sum_{n=1}^\infty\lambda(C_n).\]
This shows that  $(Y,{\mathscr C}, \lambda)$ is  a measure space.  Now we show that $(X,{\mathscr B}, \mu)$ is embedded in  $(Y,{\mathscr C}, \lambda)$. Obviously, by our  definitions we have  $X\subseteq Y$ and $C\cap X\in {\mathscr B}$ for each $C\in {\mathscr C}$. Conversely, for each $B\in {\mathscr B}$, since by our assumption ${\mathscr D}_B$ is non-empty, we have  $B=C\cap X$ for some $C\in {\mathscr C}$. Thus
\[{\mathscr B}=\{C\cap X:C\in {\mathscr C}\}.\]
Also, it is obvious that $\lambda(C)=\mu(C\cap X)$ for each $C\in {\mathscr C}$. Therefore $(X,{\mathscr B}, \mu)$ is embedded in  $(Y,{\mathscr C}, \lambda)$.

Now, suppose that $(Y,{\mathscr C}, \lambda)$ is a measure space in which $(X,{\mathscr B}, \mu)$ is embedded. We show that  $(Y,{\mathscr C}, \lambda)$ can be constructed as in the previous part.  Note that $X\subseteq Y$. Define
\[Z=\{p\in Y\backslash X:p\in C\subseteq Y\backslash X\mbox{ for some }C\in {\mathscr C} \}\]
and
\[ {\mathscr D}=\{C\cap Z:C\in {\mathscr C}\}.\]
Then obviously $(Z,{\mathscr D})$ is a measurable space.  Define
\[{\mathscr D}_B=\{C\cap Z:C\in {\mathscr C}\mbox{ and }C\cap X=B\}\]
for each $B\in {\mathscr B}$. Obviously ${\mathscr D}_B\subseteq {\mathscr D}$ and ${\mathscr D}_B$ is non-empty for each $B\in {\mathscr B}$. We verify that conditions (1)--(3) of the theorem hold. Condition (1) holds trivially. To show condition (2), note that if $D\in {\mathscr D}_B$ for some $B\in {\mathscr B}$ then $D=C\cap Z$, where $C\in {\mathscr C}$ and $C\cap X=B$. Thus
\[Z\backslash D=Z \backslash (C\cap Z)=Z\cap (Y\backslash C).\]
Now, since
\[(Y\backslash C)\cap X=X\backslash (C\cap X)=X\backslash B\]
we have $Z\backslash D\in {\mathscr D}_{X\backslash B}$. Therefore
\[\{Z\backslash D:D\in {\mathscr D}_B\}\subseteq{\mathscr D}_{X\backslash B}.\]
To show condition (3), let $B_n\in{\mathscr B}$ and $D_n\in{\mathscr D}_{B_n}$ for each $n\in\mathbb{N}$. Then $D_n=C_n\cap Z$ where $C_n\in {\mathscr C}$ and $C_n\cap X=B_n$ for each $n\in\mathbb{N}$. We have
\[\bigcup_{n=1}^\infty D_n=\bigcup_{n=1}^\infty (Z\cap C_n)=Z\cap\bigcup_{n=1}^\infty C_n\]
and
\[X\cap\bigcup_{n=1}^\infty C_n =\bigcup_{n=1}^\infty (X\cap C_n)=\bigcup_{n=1}^\infty B_n\]
where
\[\bigcup_{n=1}^\infty C_n\in {\mathscr C}.\]
Thus
\[\bigcup_{n=1}^\infty D_n\in {\mathscr D}_{\;\bigcup_{n=1}^\infty B_n},\]
i.e.,
\[\Big\{\bigcup_{n=1}^\infty D_n :D_n\in {\mathscr D}_{B_n}\Big\}\subseteq{\mathscr D}_{\;\bigcup_{n=1}^\infty B_n}.\]
This shows conditions (1)--(3). For each $p\in (Y\backslash X)\backslash Z$ let
\[ {\mathscr U}_p=\{C\cap X:p\in C\in {\mathscr C}\}.\]

\medskip

\noindent {\bf Claim 1.} {\em For each  $p\in (Y\backslash X)\backslash Z$ the set ${\mathscr U}_p$ is an ultrafilter in ${\mathscr B}$ which has c.i.p.}

\medskip

\noindent {\em Proof of Claim 1.} First note that
\begin{equation}\label{UY}
(Y\backslash X)\backslash Z=\{y\in Y\backslash X:C\cap X\neq\emptyset\mbox{ for each } C\in {\mathscr C} \mbox{ with }y\in C \}.
\end{equation}
Let $p\in(Y\backslash X)\backslash Z$. By (\ref{UY}) we have $\emptyset\notin {\mathscr U}_p$. It is obvious that $\emptyset\neq{\mathscr U}_p\subseteq {\mathscr B}$ and that ${\mathscr U}_p$ is closed under finite intersections. Now, suppose that $U\subseteq B$ for some $U\in  {\mathscr U}_p$ and $B\in {\mathscr B}$. Then $U=C\cap X$ for some $C\in  {\mathscr C}$ such that $p\in C$, and $B=G\cap X$ for some $G\in  {\mathscr C}$. If $p\notin G$ then $p\in Y\backslash G\in  {\mathscr C}$. Thus $p\in C\cap (Y\backslash G)\in {\mathscr C}$ and therefore by (\ref{UY}) and the choice of $p$ the set $C\cap (Y\backslash G)\cap X$ is non-empty. But this is a contradiction, as
\[C\cap X\cap(Y\backslash G)=U\cap(Y\backslash G)\subseteq B\cap(Y\backslash G)=(G\cap X)\cap(Y\backslash G)=\emptyset.\]
Thus $p\in G$ and therefore $B=G\cap X\in {\mathscr U}_p$. This  shows that ${\mathscr U}_p$ is a filter in ${\mathscr B}$.  To show that ${\mathscr U}_p$ is an ultrafilter, let $B\in {\mathscr B}$ be such that $B\cap U$ is non-empty for each $U\in {\mathscr U}_p$. Let $B=C\cap X$ for some  $C\in {\mathscr C}$. If $p\notin C$ then $p\in Y\backslash C$ and thus $(Y\backslash C)\cap X\in{\mathscr U}_p$, which is not possible, as  $(Y\backslash C)\cap X$ misses $B$. Therefore $p\in C$ and thus $B=C\cap X\in {\mathscr U}_p$. To show that ${\mathscr U}_p$ has  c.i.p., let  $U_1,U_2,\ldots\in {\mathscr U}_p$. Then $U_n=C_n\cap X$ where $p\in C_n\in  {\mathscr C}$ for each $n\in\mathbb{N}$. Then
\[p\in \bigcap_{n=1}^\infty C_n\in{\mathscr C}\]
and thus
\[\bigcap_{n=1}^\infty U_n=\bigcap_{n=1}^\infty (X\cap C_n)=X\cap\bigcap_{n=1}^\infty C_n\in{\mathscr U}_p.\]
Therefore
\[\bigcap_{n=1}^\infty U_n\neq\emptyset.\]
This proves the claim.

\medskip

\noindent Let
\[{\mathbb U}=\big\{{\mathscr U}_p:p\in (Y\backslash X)\backslash Z\big\}.\]
Then ${\mathbb U}$ is a collection of ultrafilters in   ${\mathscr B}$ with c.i.p. For each ${\mathscr U}\in {\mathbb U}$ define
\[S_{\mathscr U}=\big\{p\in(Y\backslash X)\backslash Z:{\mathscr U}_p={\mathscr U}\big\}.\]
Note that $S_{\mathscr U}$, for each ${\mathscr U}\in {\mathbb U}$, is non-empty, as ${\mathscr U}={\mathscr U}_p$ for some $p\in (Y\backslash X)\backslash Z$ and thus $p\in S_{\mathscr U}$. Also, for any distinct ${\mathscr U},{\mathscr V}\in {\mathbb U}$ we have $S_{\mathscr U}\cap S_{\mathscr V}=\emptyset$, as $p\in S_{\mathscr U}\cap S_{\mathscr V}$ implies that ${\mathscr U}={\mathscr U}_p={\mathscr V}$. Thus
\[\{S_{\mathscr U}:{\mathscr U}\in {\mathbb U}\}\]
is a  bijectively indexed  collection of pairwise disjoint non-empty sets. Note that by our definitions the sets  $X$, $S_{\mathscr U}$ where ${\mathscr U}\in {\mathbb U}$ and $Z$ are pairwise disjoint. Let
\[Y'=X\cup \bigcup_{{\mathscr U}\in {\mathbb U}} S_{\mathscr U}\cup Z \]
\[{\mathscr C}'=\Big\{B\cup \bigcup_{B\in {\mathscr U}\in {\mathbb U}} S_{\mathscr U}\cup D:B\in {\mathscr B}\mbox{ and } D\in {\mathscr D}_B\Big\}\]
and $\lambda':{\mathscr C}'\rightarrow [0,\infty]$ be  given  by
\[\lambda'\Big(B\cup \bigcup_{B\in {\mathscr U}\in {\mathbb U}} S_{\mathscr U}\cup D\Big)=\mu (B)\]
where $B\in {\mathscr B}$ and $D\in {\mathscr D}_B$. By the first part we know that   $(Y',{\mathscr C}', \lambda')$ is  a measure space in which $(X,{\mathscr B}, \mu)$ is embedded. We verify that
\[(Y,{\mathscr C}, \lambda)=(Y',{\mathscr C}', \lambda').\]
By our definition it is obvious that $Y'\subseteq Y$. To show the reverse inclusion, let $p\in Y$. If either $p\in X$ or $p\in Z$ then $p\in Y'$. If $p\in (Y\backslash X)\backslash Z$, then since ${\mathscr U}_p\in  {\mathbb U}$ and by our definition $p\in S_{{\mathscr U}_p}$, we have $p\in Y'$. Thus $Y\subseteq Y'$ and therefore $Y=Y'$.  Next, we verify that ${\mathscr C}={\mathscr C}'$.

\medskip

\noindent {\bf Claim 2.} {\em Let $C\in {\mathscr C}$ and $B=C\cap X$. Then
\[ \bigcup_{B\in {\mathscr U}\in {\mathbb U}}S_{\mathscr U}=C\cap\big((Y\backslash X)\backslash Z\big).\]}

\medskip

\noindent {\em Proof of Claim 2.} Suppose that $p\in S_{\mathscr U}$ for some ${\mathscr U}\in {\mathbb U}$ such that $B\in {\mathscr U}$. If $p\notin C$ then $p\in Y\backslash C\in {\mathscr C}$ and thus
\[(Y\backslash C)\cap X\in {\mathscr U}_p={\mathscr U}.\]
But this is not possible, as $(Y\backslash C)\cap X$ misses $C\cap X=B$. Thus $p\in C$. Also, since $ S_{\mathscr U}\subseteq(Y\backslash X)\backslash Z$ it is obvious that $p\in (Y\backslash X)\backslash Z$. To show the reverse inclusion, note that for each $p\in C\cap((Y\backslash X)\backslash Z)$ since  $p\in C$ we have $B=C\cap X\in{\mathscr U}_p$ and $p\in S_{{\mathscr U}_p}$. This proves the claim.

\medskip

\noindent Now, let $C'\in  {\mathscr C}'$. Then
\[C'=B\cup \bigcup_{B\in {\mathscr U}\in {\mathbb U}} S_{\mathscr U}\cup D.\]
for some $B\in {\mathscr B}$ and $D\in {\mathscr D}_B$. Thus, by the way we have defined ${\mathscr D}_B$ we have $D=C\cap Z$, for some $C\in {\mathscr C}$ such that $C\cap X=B$. By Claim 2 we have
\[C'=B\cup \bigcup_{B\in {\mathscr U}\in {\mathbb U}} S_{\mathscr U}\cup D=(C\cap X)\cup \big(C\cap\big((Y\backslash X)\backslash Z\big)\big)\cup(C\cap Z)=C\in {\mathscr C}\]
Therefore ${\mathscr C}'\subseteq{\mathscr C}$. To show the reverse inclusion let $C\in {\mathscr C}$. Let $B=C\cap X\in {\mathscr B}$ and $D=C\cap Z$. Then $D\in{\mathscr D}_B$, by the way we have defined ${\mathscr D}_B$, and thus by Claim 2 we have
\[C=(C\cap X)\cup\big(C\cap \big((Y\backslash X)\backslash Z\big)\big)\cup(C\cap Z)=B\cup \bigcup_{B\in {\mathscr U}\in {\mathbb U}} S_{\mathscr U}\cup D\in {\mathscr C}'.\]
Therefore ${\mathscr C}\subseteq{\mathscr C}'$, which together with the above shows that ${\mathscr C}={\mathscr C}'$. The fact that $\lambda=\lambda'$ is trivial, as by the above for each $C\in {\mathscr C}$ we have
\[\lambda(C)=\mu(C\cap X)=\lambda'(C).\]
This completes the proof.
\end{proof}

Let $(Y,{\mathscr C}, \lambda)$ be a measure space in which $(X,{\mathscr B},\mu)$ is embedded. Assume the representation and notation given in  Theorem \ref{FDDDB}. Then
\[Y=X\cup \bigcup_{{\mathscr U}\in {\mathbb U}} S_{\mathscr U}\cup Z \]
where (by the proof of Theorem \ref{FDDDB})
\[Z=\{p\in Y\backslash X:p\in C\subseteq Y\backslash X\mbox{ for some }C\in {\mathscr C} \}.\]
Thus
\begin{eqnarray*}
\bigcup_{{\mathscr U}\in {\mathbb U}} S_{\mathscr U}&=&(Y\backslash X)\backslash Z\\&=&\{p\in Y\backslash X:C\cap X\neq\emptyset \mbox{ for each }C\in {\mathscr C} \mbox{ with }p\in C \}.
\end{eqnarray*}
We verify that
\begin{eqnarray*}
&&\bigcup_{{\mathscr U}\in {\mathbb U} \mbox{ is free } } S_{\mathscr U}=\\&&\big\{p\in (Y\backslash X)\backslash Z:p \mbox{ is separated from each }x\in X \mbox{ by  sets in }{\mathscr C}\big\}
\end{eqnarray*}
and consequently
\begin{eqnarray*}
&&\bigcup_{{\mathscr U}\in {\mathbb U} \mbox{ is fixed} } S_{\mathscr U}=\\&&\big\{p\in(Y\backslash X)\backslash Z:p \mbox{ is not separated from some }x\in X \mbox{ by  sets in }{\mathscr C}\big\}.
\end{eqnarray*}
To show this, let $p\in S_{{\mathscr U}}$ for some free ${\mathscr U}\in {\mathbb U}$. Let $x\in X$. Since ${\mathscr U}$ is free, we have $x\notin U$ for some $U\in {\mathscr U}$. Let $D\in {\mathscr D}_U$. Then
\[C=U\cup \bigcup_{U\in {\mathscr V}\in {\mathbb U}} S_{\mathscr V}\cup D \in {\mathscr C}\]
is such that $p\in C$ and $x\notin C$. Conversely, let $p\in (Y\backslash X)\backslash Z$ be such that it can be separated from each $x\in X$ by a measurable set in ${\mathscr C}$. Let ${\mathscr U}\in {\mathbb U}$ be such that $p\in S_{\mathscr U}$. Suppose that ${\mathscr U}$ is not free. Let $x\in \bigcap{\mathscr U}$. Let
\[C=B\cup \bigcup_{B\in {\mathscr V}\in {\mathbb U}} S_{\mathscr V}\cup D \in {\mathscr C},\]
where $B\in {\mathscr B}$ and $D\in {\mathscr D}_B$, be such that $p\in C$ and $x\notin C$. Then, since $p\in S_{\mathscr U}$, we have $B\in {\mathscr U}$, and thus $x\in B$, which is not possible,  as $B\subseteq C$. Therefore  ${\mathscr U}$ is free.

Thus, in the absence of free ultrafilters in ${\mathscr B}$, each $p\in Y\backslash X$ (depending on whether $p\in Z$ or $p\notin Z$) either ``separates" from the whole $X$ by a (null) set in ${\mathscr C}$, or tightly ``sticks" to some point $x$ of $X$ so that it cannot be separated from $x$ by any measurable set in  ${\mathscr C}$. In the next section we restrict our attention to measure spaces  $(X,{\mathscr B},\mu)$ having no free ultrafilter in ${\mathscr B}$ with c.i.p. As we will see, this assumption considerably simplifies our construction.

\section{The case of  measure spaces $(X,{\mathscr B}, \mu)$ with no free ultrafilter in ${\mathscr B}$ having c.i.p.}

In this section we show that  for certain classes of measure spaces $(X,{\mathscr B}, \mu)$, the structure of measure spaces $(Y,{\mathscr C}, \lambda)$ in which  $(X,{\mathscr B}, \mu)$ is embedded is expressible in a simpler way: They are simply obtained by ``blowing" certain points of $X$  up and ``pasting" a certain measurable space to $X$ in a certain  way. This we prove in the next theorem.  Examples of measure spaces satisfying this assumption  are given in the next section.

\begin{theorem}\label{FDFB}
Let $(X,{\mathscr B},\mu)$ be a measure space. Suppose that the points of $X$ are separated by measurable sets in ${\mathscr B}$ and that  there is no free ultrafilter in  ${\mathscr B}$ with c.i.p. Then $(Y,{\mathscr C},\lambda)$ is a measure space in which $(X,{\mathscr B},\mu)$ is embedded if and only if
there exists a measurable space $(Z,{\mathscr D})$, a collection $\{{\mathscr D}_B:B\in{\mathscr B}\}$ of non-empty subsets of ${\mathscr D}$ such that
\begin{itemize}
\item[\rm(1)] $\emptyset\in{\mathscr D}_\emptyset$;
\item[\rm(2)] if $B\in{\mathscr B}$, then
\[\{Z\backslash D:D\in {\mathscr D}_B\}\subseteq{\mathscr D}_{X\backslash B};\]
\item[\rm(3)] if $B_1,B_2,\ldots\in {\mathscr B}$, then
\[\Big\{\bigcup_{n=1}^\infty D_n :D_n\in {\mathscr D}_{B_n}\Big\}\subseteq{\mathscr D}_{\;\bigcup_{n=1}^\infty B_n};\]
\end{itemize}
and a collection $\{T_u:u\in U\}$ of pairwise disjoint non-empty sets, bijectively indexed by a subset $U$ of $X$, where the sets $X$, $T_u$ for any $u\in U$, and $Z$ are pairwise disjoint, such that
\[Y=X\cup \bigcup_{u\in U} T_u\cup Z,\]
\[{\mathscr C}=\Big\{B\cup \bigcup_{u\in B\cap U} T_u\cup D:B\in {\mathscr B}\mbox{ and }D\in {\mathscr D}_B\Big\}\]
and $\lambda:{\mathscr C}\rightarrow [0,\infty]$ is given by
\[\lambda\Big(B\cup \bigcup_{u\in B\cap U} T_u\cup D\Big)=\mu(B)\]
for each $B\in {\mathscr B}$ and $D\in {\mathscr D}_B$.
\end{theorem}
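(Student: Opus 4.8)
The plan is to deduce this theorem directly from Theorem \ref{FDDDB} by showing that, under the two additional hypotheses, the only ultrafilters in ${\mathscr B}$ with c.i.p. are the fixed ``point'' ultrafilters, and then merely relabeling the index set accordingly. Since the clause involving $(Z,{\mathscr D})$ and conditions (1)--(3), together with the definition of $\lambda$, is word-for-word identical in the two theorems, no new work is required there; the entire content lies in converting the indexing collection ${\mathbb U}$ of Theorem \ref{FDDDB} into a subset $U$ of $X$.

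The key step I would isolate as a claim: a family ${\mathscr U}\subseteq{\mathscr B}$ is an ultrafilter in ${\mathscr B}$ with c.i.p. if and only if ${\mathscr U}={\mathscr U}_x:=\{B\in{\mathscr B}:x\in B\}$ for some $x\in X$, and this $x$ is then unique. For the backward implication I would note that each ${\mathscr U}_x$ is a filter, is an ultrafilter because for every $B\in{\mathscr B}$ either $x\in B$ or $x\in X\backslash B$, and has c.i.p. because every member contains $x$, so every countable intersection of members contains $x$ and is non-empty. For the forward implication I would argue that an ultrafilter with c.i.p. cannot be free, by the hypothesis that there is no free ultrafilter in ${\mathscr B}$ with c.i.p.; hence it is fixed and $\bigcap{\mathscr U}\neq\emptyset$. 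The separation hypothesis forces this intersection to be a single point: if $x,y\in\bigcap{\mathscr U}$ were distinct, a measurable set $B$ with $x\in B$ and $y\notin B$ would, by Lemma \ref{LJHG}(1), satisfy either $B\in{\mathscr U}$ (whence $y\in\bigcap{\mathscr U}\subseteq B$, absurd) or $X\backslash B\in{\mathscr U}$ (whence $x\in\bigcap{\mathscr U}\subseteq X\backslash B$, absurd). Writing $\{x\}=\bigcap{\mathscr U}$, every member of ${\mathscr U}$ contains $x$, so ${\mathscr U}\subseteq{\mathscr U}_x$, and maximality of ${\mathscr U}$ gives ${\mathscr U}={\mathscr U}_x$. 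Uniqueness of $x$, and the injectivity of $x\mapsto{\mathscr U}_x$, again follow from separation.

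With this claim in hand I would set $U=\{x\in X:{\mathscr U}_x\in{\mathbb U}\}$ and $T_x=S_{{\mathscr U}_x}$ for $x\in U$; the map $x\mapsto{\mathscr U}_x$ is then a bijection of $U$ onto ${\mathbb U}$ carrying the pairwise disjoint non-empty family $\{S_{\mathscr U}:{\mathscr U}\in{\mathbb U}\}$ to the pairwise disjoint non-empty family $\{T_u:u\in U\}$. The final translation is the observation that, for $B\in{\mathscr B}$, the condition ``$B\in{\mathscr U}\in{\mathbb U}$'' indexing the union in Theorem \ref{FDDDB} is equivalent under this bijection to ``$u\in B\cap U$'': indeed ${\mathscr U}={\mathscr U}_x$ with $x\in U$, and $B\in{\mathscr U}_x$ precisely when $x\in B$. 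Hence
\[\bigcup_{B\in{\mathscr U}\in{\mathbb U}}S_{\mathscr U}=\bigcup_{u\in B\cap U}T_u,\]
so the descriptions of $Y$, ${\mathscr C}$ and $\lambda$ in the two theorems coincide. Both directions of the equivalence then follow from the corresponding directions of Theorem \ref{FDDDB}, using for the ``if'' direction that each ${\mathscr U}_x$ is an ultrafilter with c.i.p., and for the ``only if'' direction the claim to recognize that every ${\mathscr U}\in{\mathbb U}$ has the form ${\mathscr U}_x$.

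I expect the main obstacle to be the forward half of the claim --- pinning down that a c.i.p. ultrafilter must be a single-point fixed ultrafilter --- since this is exactly where both hypotheses are consumed: the absence of free c.i.p. ultrafilters forces fixedness, and separation collapses the (non-empty) intersection to a single point. Everything after that is relabeling and the one-line rewriting of the index condition displayed above.
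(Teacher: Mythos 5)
Your proposal is correct and follows essentially the same route as the paper: both deduce the theorem from Theorem \ref{FDDDB} by showing that, under the two hypotheses, every ultrafilter in ${\mathscr B}$ with c.i.p. is the point ultrafilter ${\mathscr U}_x=\{B\in{\mathscr B}:x\in B\}$ of a unique $x\in X$ (your claim combines the paper's Claims 1 and 3), and then translating the index condition $B\in{\mathscr U}\in{\mathbb U}$ into $u\in B\cap U$ (the paper's Claims 2 and 4). No gaps.
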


\begin{proof}
Suppose that $(Y,{\mathscr C},\lambda)$ is  a measure space in which $(X,{\mathscr B},\mu)$ is embedded. Assume the representation given for $(Y,{\mathscr C},\lambda)$ in Theorem \ref{FDDDB}. Assume the notation of Theorem \ref{FDDDB}. By our assumption for each ${\mathscr U}\in {\mathbb U}$ the set $\bigcap{\mathscr U}$ is non-empty. Note that $\bigcap{\mathscr U}$ is a singleton, as if $x,z\in\bigcap{\mathscr U}$ and $x\neq z$, then by our assumption $x\in B$ and $z\notin B$ for some $B\in{\mathscr B}$. Now $X\backslash B\in{\mathscr B}$ intersects each element of ${\mathscr U}$, thus $X\backslash B\in{\mathscr U}$. This contradicts the fact that $x\notin X\backslash B$. Let
\[\bigcap{\mathscr U}=\{u_{{\mathscr U}}\}.\]
Define
\[U=\{u_{{\mathscr U}}:{\mathscr U}\in {\mathbb U}\}.\]

\medskip

\noindent {\bf Claim 1.} {\em For each ${\mathscr U}\in {\mathbb U}$ we have
\[{\mathscr U}=\{B\in {\mathscr B}:u_{{\mathscr U}}\in B\}. \]}

\medskip

\noindent {\em Proof of Claim 1.}  Let ${\mathscr U}\in {\mathbb U}$. Obviously, $u_{{\mathscr U}}\in B$ for each $B\in{\mathscr U}$. Conversely, if $B\in{\mathscr B}$ is such that $u_{{\mathscr U}}\in B$ then $B\in {\mathscr U}$. As otherwise, $B\cap G=\emptyset$ for some $G\in {\mathscr U}$. Thus $G\subseteq X\backslash B\in {\mathscr B}$ which implies that $X\backslash B\in {\mathscr U}$. This contradicts the fact that $u_{{\mathscr U}}\notin X\backslash B$ and proves the claim.

\medskip

\noindent For each $u\in U$ there exists some ${\mathscr U}\in {\mathbb U}$ such that $u=u_{{\mathscr U}}$. Note that by Claim 1 such a ${\mathscr U}$ is unique; let $T_u=S_{\mathscr U}$. The collection
\[\{T_u:u\in U\}\]
consists of non-empty sets which are pairwise disjoint and  bijectively indexed (as any distinct $u,v\in U$ are of the form  $u=u_{{\mathscr U}}$ and  $v=u_{{\mathscr V}}$ for some distinct ${\mathscr U}, {\mathscr V}\in  {\mathbb U}$).

\medskip

\noindent {\bf Claim 2.} {\em For each $B\in {\mathscr B}$ we have
\[\bigcup_{u\in B\cap U} T_u=\bigcup_{B\in {\mathscr U}\in {\mathbb U}} S_{{\mathscr U}}. \]}

\medskip

\noindent {\em Proof of Claim 2.} Suppose that $B\in {\mathscr U}\in {\mathbb U}$. By Claim 1 we have $u_{{\mathscr U}}\in B\cap U$. Note that $T_{u_{{\mathscr U}}}=S_{{\mathscr U}}$. To show the reverse inclusion, let $u\in B\cap U$. Let ${\mathscr U}\in {\mathbb U}$ be such that $u=u_{{\mathscr U}}$. Then, by our definition  $T_u=S_{\mathscr U}$. Note that by Claim 1 we have $B\in {\mathscr U}$. This proves the claim.

\medskip

\noindent Note that if $B=X$ then Claim 2 implies that
\[\bigcup_{u\in U} T_u=\bigcup_{{\mathscr U}\in {\mathbb U}} S_{{\mathscr U}}. \]
From the above the desired representation of $(Y,{\mathscr C}, \lambda)$ follows.

Conversely, suppose that $Y$, ${\mathscr C}$ and $\lambda$ are given as in the statement of the theorem. Assume the notation of the theorem. For each $u\in U$ define
\[{\mathscr U}_u=\{B\in {\mathscr B}:u\in B\}.\]

\medskip

\noindent {\bf Claim 3.} {\em For each $u\in U$ the set ${\mathscr U}_u$ is an ultrafilter in  ${\mathscr B}$ with c.i.p.}

\medskip

\noindent {\em Proof of Claim 3.} Let  $u\in U$. Obviously, $\emptyset\neq {\mathscr U}_u\subseteq{\mathscr B}$, $\emptyset\notin{\mathscr U}_u$ and  ${\mathscr U}_u$ is closed under finite intersections. Also, note that if $G\subseteq B$ for some $G\in{\mathscr U}_u$ and $B\in{\mathscr B}$, then $B\in{\mathscr U}_u$. Thus  ${\mathscr U}_u$ is a filter in ${\mathscr B}$. To show that ${\mathscr U}_u$ is an ultrafilter, suppose that $B\in {\mathscr B}$ is such that $B\cap G$ is non-empty for each $G\in{\mathscr U}_u$. If $B\notin {\mathscr U}_u$ then $u\notin B$. Thus $u\in X\backslash B\in {\mathscr B}$ and $X\backslash B\in{\mathscr U}_u$. But  $X\backslash B$ misses $B$, which is a contradiction. Therefore $B\in {\mathscr U}_u$. The fact that ${\mathscr U}_u$ has c.i.p. is obvious.  This proves the claim.

\medskip

\noindent Let
\[{\mathbb U}=\{{\mathscr U}_u:u\in U\}.\]
Note that each ${\mathscr U}\in {\mathbb U}$ is of the form ${\mathscr U}_u$ for some unique $u\in U$. This is because if $ {\mathscr U}_u={\mathscr U}={\mathscr U}_v$ for some distinct $u,v\in U$, then by our assumption $u\in B$ and $v\notin B$ for some $B\in{\mathscr B}$. Thus $B\in{\mathscr U}_u\backslash{\mathscr U}_v$. Therefore ${\mathscr U}_u\neq{\mathscr U}_v$, which is a contradiction. For each  ${\mathscr U}\in{\mathbb U}$ define $S_{{\mathscr U}}=T_u$, where $u\in U$ is such that ${\mathscr U}={\mathscr U}_u$. The collection
\[\{S_{{\mathscr U}}:{\mathscr U}\in{\mathbb U}\}\]
consists of non-empty sets which are pairwise disjoint and  bijectively indexed (as distinct elements of ${\mathbb U}$ are assigned to distinct elements of $U$).

\medskip

\noindent {\bf Claim 4.} {\em For each $B\in {\mathscr B}$ we have
\[\bigcup_{B\in {\mathscr U}\in {\mathbb U}} S_{{\mathscr U}}=\bigcup_{u\in B\cap U} T_u. \]}

\medskip

\noindent {\em Proof of Claim 4.} Let $u\in B\cap U$. Then, by our definition of  ${\mathscr U}_u$ we have $B\in {\mathscr U}_u\in{\mathbb U}$. Also, by our definition $T_u=S_{{\mathscr U}_u}$. To show the reverse inclusion, let $B\in {\mathscr U}\in {\mathbb U}$. Let $u\in U$ be such that ${\mathscr U}={\mathscr U}_u$. Then, by our definition $S_{\mathscr U}=T_u$. But since $B\in {\mathscr U}_u$, by our definition we have $u\in B$, and thus $u\in B\cap U$. This proves the claim.

\medskip

\noindent  Note that if $B=X$ then Claim 4 implies that
\[\bigcup_{{\mathscr U}\in {\mathbb U}} S_{{\mathscr U}}=\bigcup_{u\in U} T_u. \]
From the above  and Theorem \ref{FDDDB} the result  follows.
\end{proof}

\section{Examples of  measure spaces $(X,{\mathscr B},\mu)$ with no free ultrafilter in ${\mathscr B}$ having c.i.p.}

In this section we give examples of  measure spaces $(X,{\mathscr B},\mu)$ for which the assumption of Theorem \ref{FDFB} holds, i.e.,  measure spaces $(X,{\mathscr B}, \mu)$ for which there is  no free ultrafilter in ${\mathscr B}$ with c.i.p.  The following gives some  equivalent ways to express this condition. The equivalence of conditions (1) and (2) in the following proposition is well known; we include the proof in here for the sake of completeness.

\begin{proposition}\label{JJHHC}
Let  $(X, {\mathscr B})$ be a  measurable  space. Then the following are equivalent:
\begin{itemize}
\item[\rm(1)] There is no free ultrafilter in  ${\mathscr B}$  with c.i.p.
\item[\rm(2)] For every $\{0,1\}$-valued measure $\mu$ on ${\mathscr B}$ whose null sets cover $X$ we have $\mu\equiv 0$.
\item[\rm(3)] For every $\{0,1\}$-valued measure $\mu$ on ${\mathscr B}$ whose null sets cover $X$, if  ${\mathscr C}\subseteq{\mathscr B}$ is non-empty and  such that $\bigcup{\mathscr C}\in {\mathscr B}$, then
    \[\mu\Big(\bigcup{\mathscr C}\Big)=\sup_{C\in{\mathscr C}}\mu (C).\]
\end{itemize}
\end{proposition}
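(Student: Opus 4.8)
The plan is to reduce everything to the standard bijective correspondence between nontrivial $\{0,1\}$-valued measures on ${\mathscr B}$ and ultrafilters in ${\mathscr B}$ with c.i.p. Given an ultrafilter ${\mathscr U}$ in ${\mathscr B}$ with c.i.p., I would define $\mu_{\mathscr U}(B)=1$ if $B\in{\mathscr U}$ and $\mu_{\mathscr U}(B)=0$ otherwise; conversely, given a $\{0,1\}$-valued measure $\mu$ with $\mu(X)=1$, I would set ${\mathscr F}_\mu=\{B\in{\mathscr B}:\mu(B)=1\}$. The first task is to verify these are well-defined and mutually inverse. That ${\mathscr F}_\mu$ is a filter, and in fact an ultrafilter, follows from monotonicity of $\mu$ together with the modular identity $\mu(A\cup B)+\mu(A\cap B)=\mu(A)+\mu(B)$ and the relation $\mu(B)+\mu(X\backslash B)=\mu(X)=1$; that ${\mathscr F}_\mu$ has c.i.p. follows from countable subadditivity, since $\mu(B_n)=1$ for all $n$ forces $\mu(X\backslash\bigcap_{n}B_n)\le\sum_{n}\mu(X\backslash B_n)=0$, whence $\mu(\bigcap_{n}B_n)=1$ and the intersection is nonempty. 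In the reverse direction, $\mu_{\mathscr U}$ is countably additive precisely because of Lemma \ref{LJHG}(2): for pairwise disjoint $B_1,B_2,\dots$ at most one lies in ${\mathscr U}$, and $\bigcup_{n}B_n\in{\mathscr U}$ if and only if some $B_n\in{\mathscr U}$.

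Second, I would translate the hypothesis ``the null sets of $\mu$ cover $X$'' into the language of ultrafilters. For a nontrivial $\mu$ the null sets are exactly the members of ${\mathscr B}\backslash{\mathscr F}_\mu$, so they cover $X$ if and only if for every $x\in X$ there is $U\in{\mathscr F}_\mu$ with $x\notin U$ (pass to complements), that is, if and only if $\bigcap{\mathscr F}_\mu=\emptyset$, i.e. ${\mathscr F}_\mu$ is free. One must separately record the degenerate case: a $\{0,1\}$-valued measure with $\mu(X)=0$ is identically $0$, and its null sets cover $X$ trivially; this is the only measure contemplated in (2) that falls outside the correspondence.

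With these two facts the equivalences fall out. For (1)$\Leftrightarrow$(2): if some $\mu$ whose null sets cover $X$ is not identically $0$, then $\mu(X)=1$ and ${\mathscr F}_\mu$ is a free ultrafilter with c.i.p.; conversely a free ultrafilter with c.i.p. yields via $\mu_{\mathscr U}$ a nontrivial measure whose null sets cover $X$. Hence (1) fails exactly when (2) fails. For (2)$\Rightarrow$(3) there is nothing to do, since under (2) every admissible $\mu$ is identically $0$ and both sides of the displayed identity vanish. For (3)$\Rightarrow$(1) I would argue contrapositively: given a free ultrafilter ${\mathscr U}$ with c.i.p., put $\mu=\mu_{\mathscr U}$ and let ${\mathscr C}$ be the collection of all null sets of $\mu$ (nonempty, as it contains $\emptyset$); by freeness $\bigcup{\mathscr C}=X\in{\mathscr B}$, so $\mu(\bigcup{\mathscr C})=1$ while $\sup_{C\in{\mathscr C}}\mu(C)=0$, violating (3).

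The only genuinely delicate point, and the one I would check most carefully, is the matching of the two countability requirements across the correspondence: c.i.p. of ${\mathscr U}$ is exactly what Lemma \ref{LJHG}(2) needs in order to make $\mu_{\mathscr U}$ countably additive, while countable subadditivity of $\mu$ is exactly what recovers c.i.p. of ${\mathscr F}_\mu$. Everything else, namely the filter axioms, the ultrafilter maximality, and the trivial direction (2)$\Rightarrow$(3), is routine, provided one does not overlook the identically-zero measure when checking that (2) and (3) quantify over the correct class.
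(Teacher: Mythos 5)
Your proposal is correct and follows essentially the same route as the paper: both rest on the dictionary between free ultrafilters in ${\mathscr B}$ with c.i.p.\ and non-trivial $\{0,1\}$-valued measures whose null sets cover $X$, with (2)$\Rightarrow$(3) trivial and (3)$\Rightarrow$(1) refuted by a cover of $X$ by null sets. The only differences are cosmetic: you verify c.i.p.\ of ${\mathscr F}_\mu$ by countable subadditivity where the paper telescopes a decreasing sequence, and you take ${\mathscr C}$ to be all null sets where the paper takes $\{X\backslash F:F\in{\mathscr F}\}$.
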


\begin{proof}
That (2) implies (3) is trivial. (1) {\em implies} (2). Let $\mu$ be a non-trivial  $\{0,1\}$-valued measure on ${\mathscr B}$ whose null sets cover $X$. Define
\[{\mathscr F}=\big\{B\in {\mathscr B}:\mu(B)=1\big\}.\]
We show that ${\mathscr F}$ is a free ultrafilter in  ${\mathscr B}$  with c.i.p. If $F,G\in {\mathscr F}$, then since
\[\mu(F\cup G)=\mu(F\backslash G)+\mu(G)\]
and $\mu(G)=1$, we have $\mu(F\backslash G)=0$. Therefore
\[\mu(F\cap G)=\mu(F\backslash  G)+\mu(F\cap G)=\mu(F)=1\]
and thus $F\cap G\in {\mathscr F}$. Obviously, if $F\subseteq B$ for some $F\in{\mathscr F}$ and $B\in{\mathscr B}$, then $B\in{\mathscr F}$. Therefore, ${\mathscr F}$ is a filter in ${\mathscr B}$. To show that ${\mathscr F}$ is an ultrafilter, let $B\in{\mathscr B}$ be such that $B\cap F$ is non-empty for each $F\in{\mathscr F}$. If $B\notin{\mathscr F}$, then  $\mu(B)=0$, and thus, since $\mu(X\backslash B)=1$, we have $X\backslash B\in{\mathscr F}$. But this is not possible, as $B$ misses  $X\backslash B$. Therefore  $B\in{\mathscr F}$. To show that ${\mathscr F}$ has c.i.p., let $F_1,F_2,\ldots \in{\mathscr F}$. Without any loss of generality we may assume that $F_1\supseteq F_2\supseteq\cdots$. If
\[\bigcap_{n=1}^\infty F_n\notin{\mathscr F}\]
then
\[\mu\Big(\bigcap_{n=1}^\infty F_n\Big)=0.\]
Thus (with the empty intersection interpreted as $X$) we have
\begin{equation}\label{JJUY}
1=\mu\Big(X\backslash\bigcap_{n=1}^\infty F_n\Big)=\mu\Big(\bigcup_{n=1}^\infty (F_{n-1}\backslash F_n)\Big)=\sum_{n=1}^\infty\mu(F_{n-1}\backslash F_n).
\end{equation}
Now, each $G_n=F_{n-1}\backslash F_n$, where $n\in\mathbb{N}$, misses $F_n\in {\mathscr F}$ and thus $G_n\notin {\mathscr F}$; therefore $\mu(G_n)=0$. This contradicts (\ref{JJUY}) and shows that
\[\bigcap_{n=1}^\infty F_n\in {\mathscr F}.\]
To show that ${\mathscr F}$ is free, let $x\in X$. By our assumption $x\in B$ for some $B\in {\mathscr B}$ such that $\mu(B)=0$. Thus $\mu(X\backslash B)=1$. But $X\backslash B\in{\mathscr F}$ and $x\notin X\backslash B$. Therefore $\bigcap{\mathscr F}=\emptyset$.

(3) {\em implies} (1). Suppose  that  there exists a  free ultrafilter  ${\mathscr F}$ in  ${\mathscr B}$  with c.i.p. Define  $\mu:{\mathscr B}\rightarrow \{0,1\}$ such that  $\mu(B)=1$ if $B\in {\mathscr F}$ and $\mu(B)=0$ if $B\notin {\mathscr F}$. To show that $\mu$ is a measure, first note that $\mu(\emptyset)=0$. Let $B_1,B_2,\ldots \in{\mathscr B}$ be pairwise disjoint. Suppose that
\[\bigcup_{n=1}^\infty B_n\notin {\mathscr F}.\]
Then $ B_n\notin {\mathscr F}$ for some $n\in\mathbb{N}$. Therefore
\begin{equation}\label{HUY}
\mu\Big(\bigcup_{n=1}^\infty B_n\Big)=\sum_{n=1}^\infty\mu(B_n)
\end{equation}
as each side is identical to $0$. Suppose that
\[\bigcup_{n=1}^\infty B_n\in {\mathscr F}.\]
By Lemma \ref{LJHG} this implies that $ B_n\in {\mathscr F}$ for some $n\in\mathbb{N}$. Note that for each $n\neq i\in\mathbb{N}$, since $B_i\cap B_n=\emptyset$ we have $B_i\notin  {\mathscr F}$. Thus (\ref{HUY}) holds, as in this case each side is identical to $1$. This shows that $\mu$ is a measure. Since   ${\mathscr F}$ is free, for each $x\in X$ there exists some $F\in {\mathscr F}$ such that $x\notin F$. Thus $x\in X\backslash F$, and since $X\backslash F\notin {\mathscr F}$ we have $\mu(X\backslash F)=0$. Therefore the null sets of ${\mathscr B}$ cover $X$. Now, let
\[{\mathscr C}=\{X\backslash F: F\in {\mathscr F}\}.\]
Then since ${\mathscr F}$ is free, we have $\bigcup{\mathscr C}=X$, and therefore by our assumption
\[\sup_{F\in{\mathscr F}}\mu(X\backslash F)=\sup_{C\in{\mathscr C}}\mu (C)=\mu\Big(\bigcup{\mathscr C}\Big)=\mu (X)=1.\]
But this is not possible, as if $F\in{\mathscr F}$ then $X\backslash F\notin{\mathscr F}$, as it misses $F$, and thus $\mu(X\backslash F)=0$.
\end{proof}

\begin{theorem}\label{FDOH}
Let $(X,{\mathscr B})$ be a measurable  space. If  $X$ is countable then there is no free ultrafilter in ${\mathscr B}$ with c.i.p.
\end{theorem}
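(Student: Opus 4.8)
The plan is to argue by contradiction, exploiting the tension between freeness (which forces the intersection of the ultrafilter to be empty) and c.i.p. (which forbids a countable intersection of its members from being empty). Suppose that ${\mathscr U}$ is a free ultrafilter in ${\mathscr B}$ having c.i.p. Since ${\mathscr U}$ is free, $\bigcap{\mathscr U}=\emptyset$, so for every point $x\in X$ there exists some $U\in{\mathscr U}$ with $x\notin U$.

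First I would use countability of $X$ to cut the (a priori possibly uncountable) witnessing family down to a countable one. Enumerate $X=\{x_1,x_2,\ldots\}$ and, for each $n\in\mathbb{N}$, choose $U_n\in{\mathscr U}$ with $x_n\notin U_n$. Then the countable intersection $\bigcap_{n=1}^\infty U_n$ contains no point of $X$: any $x\in X$ equals some $x_n$, and $x_n\notin U_n$ excludes it from the intersection. Hence $\bigcap_{n=1}^\infty U_n=\emptyset$.

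Finally I would invoke the c.i.p. hypothesis directly. Since each $U_n\in{\mathscr U}$, the countable intersection property guarantees $\bigcap_{n=1}^\infty U_n\neq\emptyset$, contradicting the previous step. (Equivalently, since c.i.p. makes ${\mathscr U}$ closed under countable intersections, one has $\bigcap_{n=1}^\infty U_n\in{\mathscr U}$, whence $\emptyset\in{\mathscr U}$, which is impossible for a filter.) This contradiction shows that no free ultrafilter in ${\mathscr B}$ with c.i.p. can exist when $X$ is countable. There is essentially no hard step here; the only point requiring care is the observation that freeness supplies, for each \emph{individual} point, a member of the ultrafilter avoiding it, and that countability of $X$ lets us collect only countably many such members, keeping us within the scope of the countable intersection property.
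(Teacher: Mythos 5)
Your proof is correct and is essentially identical to the paper's: both enumerate $X=\{x_1,x_2,\ldots\}$, use freeness to pick $U_n\in{\mathscr U}$ with $x_n\notin U_n$, and observe that $\bigcap_{n=1}^\infty U_n=\emptyset$ contradicts the countable intersection property. No gaps.
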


\begin{proof}
Let $X=\{x_1,x_2,\ldots\}$ and let ${\mathscr F}$ be a free  ultrafilter in ${\mathscr B}$. Since ${\mathscr F}$ is free for each $n\in\mathbb{N}$ there is some $F_n\in{\mathscr F}$ such that $x_n\notin F_n$. Then
\[\bigcap_{n=1}^\infty F_n=\emptyset\]
and thus ${\mathscr F}$ does not have  c.i.p.
\end{proof}

\begin{theorem}\label{FDDHH}
Let $(Y,{\mathscr C})$ be a measurable space. Let $X\in{\mathscr C}$ and
\[{\mathscr B}=\{C\in{\mathscr C}: C\subseteq X \},\]
i.e., $X\in{\mathscr C}$ and $(X,{\mathscr B})$ is embedded in $(Y,{\mathscr C})$. If there is no free ultrafilter in ${\mathscr C}$ with c.i.p. then there is no free ultrafilter in ${\mathscr B}$  with c.i.p.
\end{theorem}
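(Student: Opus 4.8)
The plan is to prove the contrapositive: assuming that ${\mathscr B}$ carries a free ultrafilter with c.i.p., I would manufacture one in ${\mathscr C}$. Two structural observations make this transparent. First, ${\mathscr B}\subseteq{\mathscr C}$, since by definition every member of ${\mathscr B}$ is an element of ${\mathscr C}$ contained in $X$. Second, the trace map $C\mapsto C\cap X$ carries ${\mathscr C}$ into ${\mathscr B}$: for $C\in{\mathscr C}$ we have $C\cap X\in{\mathscr C}$ (because $X\in{\mathscr C}$) and $C\cap X\subseteq X$, whence $C\cap X\in{\mathscr B}$. This map commutes with (countable) intersections, preserves inclusions, and fixes every element of ${\mathscr B}$, which is exactly what will let all the relevant properties transport.

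Given a free ultrafilter ${\mathscr F}$ in ${\mathscr B}$ with c.i.p., I would define its pullback
\[{\mathscr G}=\{C\in{\mathscr C}:C\cap X\in{\mathscr F}\}\]
and claim it is a free ultrafilter in ${\mathscr C}$ with c.i.p. Two preliminary remarks anchor the verification. Since ${\mathscr F}$ is a nonempty filter, any $F\in{\mathscr F}$ satisfies $F\subseteq X\in{\mathscr B}$, so upward closure forces $X\in{\mathscr F}$; hence $Y\cap X=X\in{\mathscr F}$ and $Y\in{\mathscr G}$, so ${\mathscr G}\neq\emptyset$. Moreover ${\mathscr F}\subseteq{\mathscr G}$, because each $F\in{\mathscr F}$ has $F\cap X=F\in{\mathscr F}$.

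The verifications then run mechanically. That ${\mathscr G}$ is a filter follows since $\emptyset\notin{\mathscr G}$ (as $\emptyset\cap X=\emptyset\notin{\mathscr F}$), while closure under finite intersections and under ${\mathscr C}$-supersets both transport across $C\mapsto C\cap X$ from the corresponding properties of ${\mathscr F}$. To see that ${\mathscr G}$ is maximal I would invoke the filter-base characterization in ${\mathscr C}$: if $C\in{\mathscr C}$ meets every element of ${\mathscr G}$ yet $C\notin{\mathscr G}$, then $C\cap X\notin{\mathscr F}$, so Lemma \ref{LJHG}(1) gives $X\setminus C=X\setminus(C\cap X)\in{\mathscr F}\subseteq{\mathscr G}$, contradicting $C\cap(X\setminus C)=\emptyset$. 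For c.i.p., if $C_1,C_2,\ldots\in{\mathscr G}$ then each $C_n\cap X\in{\mathscr F}$, and c.i.p.\ of ${\mathscr F}$ gives $\bigcap_n(C_n\cap X)=\big(\bigcap_n C_n\big)\cap X\neq\emptyset$, so $\bigcap_n C_n\neq\emptyset$. Finally freeness is immediate from ${\mathscr F}\subseteq{\mathscr G}$, since $\bigcap{\mathscr G}\subseteq\bigcap{\mathscr F}=\emptyset$. This produces the forbidden free ultrafilter in ${\mathscr C}$ with c.i.p.

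The step to watch is the choice of extension. The naive route—viewing ${\mathscr F}$ as a filter-base in ${\mathscr C}$ and enlarging it to some ultrafilter of ${\mathscr C}$ by the general existence theorem—fails, because an arbitrary ultrafilter extension need not preserve c.i.p. The key idea is therefore to write down the canonical extension ${\mathscr G}$, whose membership criterion is phrased entirely in terms of ${\mathscr F}$, so that c.i.p.\ and freeness are inherited rather than rebuilt. Accordingly the one genuinely nontrivial point is maximality of ${\mathscr G}$, which I expect to be the main (though mild) obstacle; it is precisely where Lemma \ref{LJHG}(1) is needed.
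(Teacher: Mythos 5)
Your proof is correct and takes essentially the same route as the paper: your pullback ${\mathscr G}=\{C\in{\mathscr C}:C\cap X\in{\mathscr F}\}$ coincides with the paper's upward closure $\{G\in{\mathscr C}:G\supseteq F\mbox{ for some }F\in{\mathscr F}\}$ (since every $F\in{\mathscr F}$ satisfies $F\subseteq X$), and the verifications of maximality, freeness, and c.i.p.\ transport in the same way.
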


\begin{proof}
Simply note that if ${\mathscr F}$ is a free ultrafilter  in ${\mathscr B}$  with c.i.p., then
\[{\mathscr G}=\{G\in {\mathscr C}:G\supseteq F\mbox{ for some }F\in{\mathscr F} \}\]
is a filter in ${\mathscr C}$ which is  an ultrafilter (as if $C\in {\mathscr C}$ meets each $G\in {\mathscr G}$ then, since ${\mathscr F}\subseteq{\mathscr G}$, the set $(X\cap C)\cap F=C\cap F$ is non-empty for each $F\in{\mathscr F}$ and thus $X\cap C\in{\mathscr F}$ which implies that $C\in {\mathscr G}$, as $X\cap C\subseteq C$) and it is free (as ${\mathscr F}\subseteq{\mathscr G}$ and thus $\bigcap{\mathscr G}\subseteq\bigcap{\mathscr F}$ and ${\mathscr F}$ is free) and has c.i.p. (as ${\mathscr F}$ has, and if $G_1,G_2,\ldots\in{\mathscr G}$ then
\[\bigcap_{n=1}^\infty G_n\supseteq\bigcap_{n=1}^\infty F_n,\]
where for each $n\in\mathbb{N}$, the element $F_n\in{\mathscr F}$ is such that $F_n\subseteq G_n$).
\end{proof}

\begin{theorem}\label{JLC}
Let $(X,{\mathscr B})\subseteq(Y,{\mathscr C})$ and let $Y\backslash X$ be countable. If there is no free ultrafilter in ${\mathscr B}$ with c.i.p. then there is no free ultrafilter in ${\mathscr C}$ with c.i.p.
\end{theorem}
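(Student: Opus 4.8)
The plan is to argue by contraposition: I would assume there is a free ultrafilter ${\mathscr G}$ in ${\mathscr C}$ with c.i.p.\ and manufacture from it a free ultrafilter in ${\mathscr B}$ with c.i.p., contradicting the hypothesis. The difficulty to keep in mind throughout is that ${\mathscr B}$ is \emph{not} a subfamily of ${\mathscr C}$: its members are subsets of $X$, related to ${\mathscr C}$ only through the embedding identity ${\mathscr B}=\{C\cap X:C\in{\mathscr C}\}$, so one cannot simply restrict ${\mathscr G}$ to ${\mathscr B}$. Overcoming this is exactly where the hypothesis that $Y\backslash X$ is countable will be spent.

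The first and crucial step is to produce an element of ${\mathscr G}$ that is contained in $X$. Enumerating $Y\backslash X=\{y_1,y_2,\ldots\}$ and using freeness of ${\mathscr G}$, I would choose for each $n$ some $G_n\in{\mathscr G}$ with $y_n\notin G_n$; since ${\mathscr G}$ has c.i.p.\ it is $\sigma$-complete, so $G=\bigcap_{n=1}^\infty G_n\in{\mathscr G}$, and by construction $G\subseteq X$ (when $Y\backslash X$ is empty one simply takes $G=Y$). As $G\in{\mathscr C}$ with $G\subseteq X$, the embedding identity gives $G=G\cap X\in{\mathscr B}$; moreover, for any $B\in{\mathscr B}$, writing $B=C\cap X$ with $C\in{\mathscr C}$ and using $G\subseteq X$ yields $B\cap G=C\cap G\in{\mathscr C}$. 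This last identity is what repairs the defect noted above: intersecting with $G$ sends ${\mathscr B}$ back into ${\mathscr C}$.

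Finally I would set ${\mathscr F}=\{B\in{\mathscr B}:B\cap G\in{\mathscr G}\}$ and check that it is a free ultrafilter in ${\mathscr B}$ with c.i.p. That it is a proper filter (with $\emptyset\notin{\mathscr F}$ and $X\in{\mathscr F}$, the latter since $X\cap G=G\in{\mathscr G}$) follows by transporting the filter properties of ${\mathscr G}$ along $B\mapsto B\cap G$. For the ultrafilter property I would show that if $B\in{\mathscr B}$ and $B\notin{\mathscr F}$ then $B\cap G\notin{\mathscr G}$, whence $Y\backslash(B\cap G)\in{\mathscr G}$ by Lemma~\ref{LJHG}(1), so $(X\backslash B)\cap G=G\backslash(B\cap G)\in{\mathscr G}$ and thus $X\backslash B\in{\mathscr F}$; this forces any $B$ meeting every member of ${\mathscr F}$ to lie in ${\mathscr F}$, which is the required characterization of an ultrafilter. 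The c.i.p.\ is immediate from $\sigma$-completeness of ${\mathscr G}$, since $(\bigcap_n B_n)\cap G=\bigcap_n(B_n\cap G)\in{\mathscr G}$. Freeness follows because, given $x\in X$, freeness of ${\mathscr G}$ supplies $H\in{\mathscr G}$ with $x\notin H$, and then $H\cap G\in{\mathscr F}$ while $x\notin H\cap G$. The resulting free ultrafilter in ${\mathscr B}$ with c.i.p.\ contradicts the hypothesis. The main obstacle is entirely concentrated in the first step: once an element $G\subseteq X$ of ${\mathscr G}$ has been found, everything else is routine verification along the map $B\mapsto B\cap G$.
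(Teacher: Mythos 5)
Your argument is correct, and it exploits the countability of $Y\backslash X$ in the same place the paper does, but the execution is genuinely different and in one respect cleaner. The paper forms the trace ${\mathscr A}=\{H\cap X:H\in{\mathscr H}\}$, checks only that it is a filter-base (using the sets $H_n$ with $y_n\notin H_n$ to rule out $\emptyset\in{\mathscr A}$), extends it by the ultrafilter extension lemma to some ultrafilter ${\mathscr F}\supseteq{\mathscr A}$, and then verifies freeness and c.i.p.\ of ${\mathscr F}$ by a computation that again carries the $H_n$ along. You instead collapse the $H_n$ immediately into a single set $G=\bigcap_n G_n\in{\mathscr G}$ with $G\subseteq X$ (using $\sigma$-completeness of a c.i.p.\ ultrafilter, which the paper records in the introduction), and then write down the target ultrafilter explicitly as ${\mathscr F}=\{B\in{\mathscr B}:B\cap G\in{\mathscr G}\}$; the identity $B\cap G=C\cap G\in{\mathscr C}$ for $B=C\cap X$ is exactly what makes this well defined. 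This buys two things: no appeal to the extension lemma is needed, and your ${\mathscr F}$ is in fact equal to the trace ${\mathscr A}$ itself (if $B\cap G\in{\mathscr G}$ and $B=C\cap X$ then $C\supseteq B\cap G$ forces $C\in{\mathscr G}$), so you are showing that the trace is already an ultrafilter rather than merely a filter-base. The cost is negligible; all the verifications you list (filter, dichotomy $B\in{\mathscr F}$ or $X\backslash B\in{\mathscr F}$ via Lemma~\ref{LJHG}(1), c.i.p., freeness) go through as stated.
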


\begin{proof}
Let
\[Y\backslash X=\{y_1,y_2,\ldots\}.\]
Let ${\mathscr H}$ be a free ultrafilter in ${\mathscr C}$ with c.i.p. Since ${\mathscr H}$ is free, for any $n\in\mathbb{N}$ there exists some $H_n\in {\mathscr H}$ such that $y_n\notin H_n$. Let
\[{\mathscr A}=\{H\cap X:H\in{\mathscr H}\}\subseteq{\mathscr B}.\]
Note that $\emptyset\notin{\mathscr A}$, as otherwise $H\cap X=\emptyset$ for some $H\in{\mathscr H}$. Now
\[H\cap\bigcap_{n=1}^\infty H_n=\emptyset,\]
as
\[H\cap\bigcap_{n=1}^\infty H_n\subseteq (X\cap H)\cup\Big((Y\backslash X)\cap\bigcap_{n=1}^\infty H_n\Big)=\emptyset\]
contradicting the fact that ${\mathscr H}$ has c.i.p. Thus ${\mathscr A}$ is a filter-base in ${\mathscr B}$, as ${\mathscr A}$ is closed under finite intersections. Let ${\mathscr F}$ be an ultrafilter in ${\mathscr B}$ such that ${\mathscr A}\subseteq{\mathscr F}$. Then since
\[\bigcap{\mathscr F}\subseteq\bigcap{\mathscr A}=\bigcap{\mathscr H}\cap X\]
(and ${\mathscr H}$ is free), ${\mathscr F}$  is free. To show that ${\mathscr F}$ has c.i.p., let $F_1,F_2,\ldots\in{\mathscr F}$. Let $F_n=C_n\cap X$ where $C_n\in{\mathscr C}$ for each $n\in\mathbb{N}$. Let $n\in\mathbb{N}$. For each $H\in {\mathscr H}$, since $H\cap X\in {\mathscr A}\subseteq{\mathscr F}$ we have
\[C_n\cap H\cap X=F_n\cap H\cap X\in {\mathscr F}.\]
Therefore $H\cap C_n$ is non-empty and thus (since ${\mathscr H}$ is an ultrafilter) $C_n\in {\mathscr H}$. Now, since
\begin{eqnarray*}
\bigcap_{n=1}^\infty C_n\cap\bigcap_{n=1}^\infty H_n&\subseteq&\Big(X\cap\bigcap_{n=1}^\infty C_n\Big)\cup\Big((Y\backslash X)\cap\bigcap_{n=1}^\infty H_n\Big)\\&=&\bigcap_{n=1}^\infty (X\cap C_n)=\bigcap_{n=1}^\infty F_n
\end{eqnarray*}
and ${\mathscr H}$ has c.i.p., we have
\[\bigcap_{n=1}^\infty F_n\neq\emptyset.\]
Therefore ${\mathscr F}$ has c.i.p.
\end{proof}

If $(X,{\mathscr B})$ and $(Y,{\mathscr C})$ are measurable spaces, we denote by ${\mathscr B}\times{\mathscr C}$ the smallest $\sigma$-algebra on $X\times Y$ containing the set
\[\{B\times C:B\in{\mathscr B}\mbox{ and }C\in{\mathscr C}\}\]
of all measurable rectangles in $X\times Y$.

\begin{theorem}\label{JLFC}
Let $(X,{\mathscr B})$ and $(Y,{\mathscr C})$ be  measurable spaces such that in each of them singletons are measurable. Then the following are equivalent:
\begin{itemize}
\item[\rm(1)] There is no free ultrafilter  in ${\mathscr B}\times{\mathscr C}$  with c.i.p.
\item[\rm(2)] Neither there is a free ultrafilter  in ${\mathscr B}$ with c.i.p. nor  there is a free ultrafilter  in ${\mathscr C}$ with c.i.p.
\end{itemize}
\end{theorem}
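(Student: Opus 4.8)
\emph{Plan.} The cleanest route is to translate everything into the language of $\{0,1\}$-valued measures via Proposition \ref{JJHHC}. Since in both factors singletons are measurable, a free ultrafilter with c.i.p.\ in ${\mathscr B}$ corresponds exactly to a nontrivial $\{0,1\}$-valued measure $\mu$ on ${\mathscr B}$ with $\mu(\{x\})=0$ for every $x\in X$ (freeness forces each singleton to be null, and conversely a point of measure $1$ would fix the associated ultrafilter); the same correspondence holds for ${\mathscr C}$ and for ${\mathscr B}\times{\mathscr C}$. Working entirely through Proposition \ref{JJHHC} has the advantage that the measures I build are genuine measures \emph{by construction}, so the c.i.p.\ is automatic on converting back and never needs separate verification. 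I would prove the equivalence by establishing the two implications in contrapositive form, and I may assume $X$ and $Y$ are nonempty, since otherwise the product is empty and the statement degenerates.

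For the implication (1)$\Rightarrow$(2), argue contrapositively: assuming, say, a free c.i.p.\ ultrafilter in ${\mathscr B}$, produce one in ${\mathscr B}\times{\mathscr C}$. Fix a point $y_0\in Y$ and consider the insertion $\iota\colon X\to X\times Y$, $\iota(x)=(x,y_0)$. This map is measurable because the preimage of each rectangle $B\times C$ is either $B$ or $\emptyset$, so $\{E:\iota^{-1}(E)\in{\mathscr B}\}$ is a $\sigma$-algebra containing all rectangles, hence all of ${\mathscr B}\times{\mathscr C}$. The pushforward $\nu=\iota_*\mu$, given by $\nu(E)=\mu(\iota^{-1}(E))$, is then a $\{0,1\}$-valued measure on ${\mathscr B}\times{\mathscr C}$. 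One checks $\nu(X\times Y)=\mu(X)=1$ for nontriviality, and $\nu(\{(x,y)\})=\mu(\{x\})=0$ when $y=y_0$ and $=0$ otherwise, so $\nu$ vanishes on every singleton; Proposition \ref{JJHHC} then yields the desired free c.i.p.\ ultrafilter in ${\mathscr B}\times{\mathscr C}$. By symmetry the same construction works starting from ${\mathscr C}$.

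For the implication (2)$\Rightarrow$(1), again argue contrapositively: starting from a free c.i.p.\ ultrafilter ${\mathscr G}$ in ${\mathscr B}\times{\mathscr C}$ with associated $\{0,1\}$-valued measure $\nu$ (free, hence $\nu$ kills every singleton $\{(x,y)\}$), I would pass to the two marginals $\mu_X(B)=\nu(B\times Y)$ and $\mu_Y(C)=\nu(X\times C)$. These are nontrivial $\{0,1\}$-valued measures on ${\mathscr B}$ and ${\mathscr C}$, being pushforwards under the projections, which are measurable since $\pi_X^{-1}(B)=B\times Y$ and $\pi_Y^{-1}(C)=X\times C$ are rectangles. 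The step where I expect the real work lies is that a marginal may well be \emph{fixed}: it is not automatic that either $\mu_X$ or $\mu_Y$ vanishes on singletons. The key observation resolving this is that they cannot \emph{both} be fixed: if $\mu_X(\{x_0\})=1$ and $\mu_Y(\{y_0\})=1$, then $\{x_0\}\times Y$ and $X\times\{y_0\}$ both lie in ${\mathscr G}$, so their intersection $\{(x_0,y_0)\}$ lies in ${\mathscr G}$ as well, contradicting the freeness of ${\mathscr G}$. Hence at least one marginal, say $\mu_X$, vanishes on all singletons, and Proposition \ref{JJHHC} converts it into a free c.i.p.\ ultrafilter in ${\mathscr B}$, completing the contrapositive. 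The whole argument thus hinges on this intersection trick, since the naive hope that both projections of a free ultrafilter are free is simply false.
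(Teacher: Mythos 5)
Your proof is correct, and it is worth saying how it relates to the paper's. The two arguments rest on the same combinatorial facts: for (1)$\Rightarrow$(2), that the $y_0$-sections of the members of a suitable product ultrafilter lie in the given ultrafilter of $\mathscr{B}$; for (2)$\Rightarrow$(1), that a product ultrafilter both of whose marginals are fixed is itself fixed at the resulting point $(p,q)$ --- your ``intersection trick'' is exactly the paper's closing contradiction. The difference is packaging. The paper works directly with ultrafilters: it extends the filter-bases $\{F\times\{y\}:F\in\mathscr{F}\}$ and $\{B:B\times C\in\mathscr{H}\mbox{ for some }C\}$ to ultrafilters and then verifies freeness and c.i.p.\ by hand (via sections in one direction, via the claim $F\times Y\in\mathscr{H}$ in the other). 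You instead route everything through Proposition \ref{JJHHC}: your pushforward $\iota_*\mu$ is precisely the measure of the ultrafilter $\{E:\iota^{-1}(E)\in\mathscr{F}\}$ that the paper constructs by extension, and your marginal $\mu_X(B)=\nu(B\times Y)$ is the measure of the paper's $\mathscr{F}$ (note $\{B:B\times C\in\mathscr{H}\mbox{ for some }C\}=\{B:B\times Y\in\mathscr{H}\}$, which is already an ultrafilter). What your formulation buys is that countable additivity --- hence c.i.p.\ on translating back --- is automatic for pushforwards, and no ultrafilter extension lemma is needed; the small price is that you lean on the dictionary between free c.i.p.\ ultrafilters and nontrivial $\{0,1\}$-valued measures vanishing on singletons, which you state correctly but which deserves one explicit line: Proposition \ref{JJHHC} is phrased in terms of null sets covering $X$, and this is equivalent to vanishing on singletons exactly because singletons are assumed measurable. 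One caveat on your nonemptiness remark: if, say, $Y=\emptyset$ while $\mathscr{B}$ carries a free c.i.p.\ ultrafilter, the equivalence as literally stated fails rather than merely degenerating; but the paper's proof also silently fixes $y\in Y$, so this is not a defect of your argument relative to the paper's.
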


\begin{proof}
(1) {\em implies} (2). Let  ${\mathscr F}$ be a free ultrafilter  in ${\mathscr B}$ with c.i.p. Fix some $y\in Y$ and let
\[{\mathscr A}=\big\{F\times\{y\}:F\in {\mathscr F}\big\}.\]
Then ${\mathscr A}$ is a filter-base in  ${\mathscr B}\times{\mathscr C}$, as $\emptyset\notin{\mathscr A}$ and ${\mathscr A}$ is closed under finite intersections. Let ${\mathscr H}$ be an ultrafilter in ${\mathscr B}\times{\mathscr C}$  such that ${\mathscr A}\subseteq{\mathscr H}$. Then
\[\bigcap{\mathscr H}\subseteq\bigcap{\mathscr A}=\Big(\bigcap{\mathscr F}\Big)\times\{y\}=\emptyset\]
and ${\mathscr H}$ is also free. We verify that ${\mathscr H}$ has c.i.p. Let $H_1,H_2,\ldots\in{\mathscr H}$. Let
\[H_n^y=\big\{x\in X:(x,y)\in H_n\big\}\]
for each $n\in\mathbb{N}$. Then $H_n^y$, for each $n\in\mathbb{N}$, being the $y$-section of the measurable set $H_n$ of ${\mathscr B}\times{\mathscr C}$ is measurable in $X$. Note that for each $n\in\mathbb{N}$ and $F\in{\mathscr F}$, since $F\times\{y\}\in {\mathscr H}$, we have
\[H_n\cap\big(F\times\{y\}\big)\neq\emptyset,\]
and thus $H_n^y\cap F$ is non-empty. Therefore, since ${\mathscr F}$ is an ultrafilter, we have $H_n^y\in {\mathscr F}$ for each $n\in\mathbb{N}$. Since ${\mathscr F}$ has c.i.p., we have
\[\bigcap_{n=1}^\infty H_n^y\neq\emptyset.\]
Now since
\[\Big(\bigcap_{n=1}^\infty H_n^y\Big)\times\{y\}\subseteq\bigcap_{n=1}^\infty H_n\]
it follows that
\[\bigcap_{n=1}^\infty H_n\neq\emptyset.\]
A similar argument can be used in the case when there is a free ultrafilter in ${\mathscr C}$ with c.i.p.

(2) {\em implies} (1). Let ${\mathscr H}$  be a  free ultrafilter in  ${\mathscr B}\times{\mathscr C}$ with c.i.p. Let
\[{\mathscr A}=\{B\in {\mathscr B}:B\times C\in{\mathscr H}\mbox{ for some } C\in {\mathscr C} \}.\]
Then ${\mathscr A}$ is a filter-base in ${\mathscr B}$, as $\emptyset\notin{\mathscr A}$ and it is closed under finite intersections. Let ${\mathscr F}$ be an ultrafilter in ${\mathscr B}$ such that ${\mathscr A}\subseteq{\mathscr F}$.

\medskip

\noindent {\bf Claim.} {\em For each $F\in{\mathscr F}$ we have $F\times Y\in{\mathscr H}$.}

\medskip

\noindent {\em Proof of the claim.}  Otherwise, if $F\times Y\notin {\mathscr H}$ for some $F\in {\mathscr F}$, then since  ${\mathscr H}$ is an ultrafilter, we have $(F\times Y)\cap H=\emptyset$ for some $H\in {\mathscr H}$. Thus $H\subseteq (X\backslash F)\times Y$ and therefore $(X\backslash F)\times Y\in {\mathscr H}$.  But this implies that $X\backslash F\in {\mathscr A}$, and thus $X\backslash F\in {\mathscr F}$, which is not possible, as it misses $F\in {\mathscr F}$.

\medskip

\noindent Now, we show that ${\mathscr F}$ has c.i.p. Let $F_1,F_2,\ldots \in {\mathscr F}$.  By the above $F_n\times Y\in {\mathscr H}$ for each $n\in\mathbb{N}$ and thus, since ${\mathscr H}$ has c.i.p.,
\[\bigcap_{n=1}^\infty (F_n\times Y)\neq\emptyset.\]
But
\[\bigcap_{n=1}^\infty (F_n\times Y)=\Big(\bigcap_{n=1}^\infty F_n\Big)\times Y.\]
Therefore
\[\bigcap_{n=1}^\infty F_n\neq\emptyset.\]
By our assumption ${\mathscr F}$ is not free, i.e., $\bigcap{\mathscr F}$ is non-empty. Let $p\in \bigcap{\mathscr F}$. Then $\{p\}\in{\mathscr B}$ meets each $F\in {\mathscr F}$, and thus, since  ${\mathscr F}$ is an ultrafilter we have $\{p\}\in{\mathscr F}$. By the claim $\{p\}\times Y\in {\mathscr H}$. Similarly, there exists some $q\in Y$ such that $X\times\{q\}\in{\mathscr H}$. Now
\[\big\{(p,q)\big\}=\big(\{p\} \times Y\big)\cap\big(X\times\{q\}\big)\in {\mathscr H}\]
and thus, since $\{(p,q)\}\cap H$ is non-empty for each $H\in {\mathscr H}$, we have $(p,q)\in \bigcap{\mathscr H}$, which is a contradiction.
\end{proof}

Let $X$ be a completely regular topological space. Recall that {\em the $\sigma$-algebra of Baire subsets of $X$} (denoted by  ${\mathscr B}^*(X)$) is the smallest  $\sigma$-algebra in $X$ containing ${\mathscr Z}(X)$. By a {\em Baire measure on $X$} we mean a finite measure on  ${\mathscr B}^*(X)$. The {\em support} of a Baire measure $\mu$ on $X$ is defined to be the set
\[\big\{x\in X:\mu(U)>0\mbox{ for every }U\in\mathrm{Coz}(X)\mbox{ such that }x\in U\big\}\]
and is denoted by $\mathrm{supp}(\mu)$.

The following Lemma is well known. (See \cite{GP}.)

\begin{lemma}\label{FRAYU}
Let $X$ be a completely regular topological  space.
Then the following are equivalent:
\begin{itemize}
\item[\rm(1)] $X$ is realcompact.
\item[\rm(2)] Each $\{0,1\}$-valued non-trivial Baire measure on $X$ has a non-empty support.
\item[\rm(3)] Each ultrafilter in ${\mathscr Z}(X)$ with c.i.p. is fixed.
\end{itemize}
\end{lemma}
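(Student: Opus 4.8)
The plan is to establish the two equivalences $(1)\Leftrightarrow(3)$ and $(2)\Leftrightarrow(3)$ separately. The first is the classical $z$-ultrafilter description of the Hewitt realcompactification, while the second rests on a correspondence between non-trivial $\{0,1\}$-valued Baire measures on $X$ and ultrafilters in ${\mathscr Z}(X)$ with c.i.p. For $(1)\Leftrightarrow(3)$ I would invoke the standard bijection $p\mapsto{\mathscr A}^p=\{Z\in{\mathscr Z}(X):p\in\mathrm{cl}_{\beta X}Z\}$ between the points of $\beta X$ and the ultrafilters in ${\mathscr Z}(X)$ (see \cite{GJ}). Since $\mathrm{cl}_{\beta X}Z\cap X=Z$ for every zero-set $Z$, the points of $X$ correspond exactly to the fixed ultrafilters, and the points of $\upsilon X$ correspond exactly to those ultrafilters having c.i.p.; hence $X=\upsilon X$---that is, $X$ is realcompact---precisely when every ultrafilter in ${\mathscr Z}(X)$ with c.i.p. is fixed.

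For $(3)\Rightarrow(2)$, given a non-trivial $\{0,1\}$-valued Baire measure $\mu$ I would set ${\mathscr F}=\{Z\in{\mathscr Z}(X):\mu(Z)=1\}$. As in the proof of Proposition \ref{JJHHC}, ${\mathscr F}$ is a filter in ${\mathscr Z}(X)$. To see it is an ultrafilter, suppose a zero-set $Z$ meets every member of ${\mathscr F}$ yet $Z\notin{\mathscr F}$; then $\mu(X\backslash Z)=1$, and writing the cozero-set $X\backslash Z=\bigcup_{n=1}^\infty Z_n$ as an increasing union of zero-sets, continuity from below together with $\{0,1\}$-valuedness forces $\mu(Z_n)=1$ for some $n$, so $Z_n\in{\mathscr F}$ is disjoint from $Z$, a contradiction. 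Countable additivity makes ${\mathscr F}$ closed under countable intersections, so it has c.i.p., and by $(3)$ it is fixed. Choosing $x\in\bigcap{\mathscr F}$ one checks $x\in\mathrm{supp}(\mu)$, since any cozero-set $U\ni x$ with $\mu(U)=0$ would make $X\backslash U$ a zero-set in ${\mathscr F}$ not containing $x$.

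For $(2)\Rightarrow(3)$ I would argue contrapositively. From a free ultrafilter ${\mathscr F}$ in ${\mathscr Z}(X)$ with c.i.p.---which, by maximality together with c.i.p., is closed under countable intersections---define $\mu(E)=1$ if $E$ contains some member of ${\mathscr F}$ and $\mu(E)=0$ otherwise. The substantive work is to show $\mu$ is a well-defined $\{0,1\}$-valued Baire measure. First, the collection ${\mathscr M}$ of Baire sets $E$ such that $E$ or $X\backslash E$ contains a member of ${\mathscr F}$ contains every zero-set (using maximality of ${\mathscr F}$ for the zero-sets not in ${\mathscr F}$, and again the representation of a cozero-set as an increasing countable union of zero-sets), and ${\mathscr M}$ is a $\sigma$-algebra, whence ${\mathscr M}={\mathscr B}^*(X)$; closure of ${\mathscr M}$ under countable unions in the measure-zero case is exactly where closure of ${\mathscr F}$ under countable intersections is used. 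The same case split gives countable additivity. Finally $\mu(X)=1$, so $\mu$ is non-trivial, while freeness yields, for each $x\in X$, some $Z\in{\mathscr F}$ with $x\notin Z$, so that $X\backslash Z$ is a cozero-set containing $x$ with $\mu(X\backslash Z)=0$; thus $\mathrm{supp}(\mu)=\emptyset$, negating $(2)$.

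The hard part throughout the equivalence $(2)\Leftrightarrow(3)$ is that the complement of a zero-set is only a cozero-set, so the clean complement symmetry exploited in Proposition \ref{JJHHC} is unavailable: every step that in the $\sigma$-algebra setting relied on $X\backslash B\in{\mathscr B}$ must instead be routed through the representation of a cozero-set as an increasing countable union of zero-sets, combined with the $\{0,1\}$-valued and countably additive nature of $\mu$ (equivalently, the c.i.p.\ of ${\mathscr F}$). I expect verifying ${\mathscr M}={\mathscr B}^*(X)$ and the countable additivity of $\mu$ in the $(2)\Rightarrow(3)$ direction to be where this bookkeeping is heaviest.
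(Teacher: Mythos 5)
The paper offers no proof of this lemma at all: it is stated as well known, with a pointer to \cite{GP}, so there is no in-paper argument to compare yours against. Your proof is correct and follows the standard route one would find in the literature: the Gillman--Jerison correspondence $p\mapsto{\mathscr A}^p$ between points of $\beta X$ and $z$-ultrafilters, restricted to $\upsilon X$ and to $X$, gives $(1)\Leftrightarrow(3)$; and the passage back and forth between non-trivial $\{0,1\}$-valued Baire measures and $z$-ultrafilters with c.i.p.\ gives $(2)\Leftrightarrow(3)$. You also correctly identify the one genuine technical point, namely that ${\mathscr Z}(X)$ is not closed under complementation, so the maximality test in $(3)\Rightarrow(2)$ and the verification that your class ${\mathscr M}$ is all of ${\mathscr B}^*(X)$ in $(2)\Rightarrow(3)$ must be routed through writing a cozero-set as an increasing countable union of zero-sets; each of those steps goes through as you describe (for instance, in the additivity check the ``at most one piece of measure one'' half is just the filter property, and the ``at least one'' half uses closure of ${\mathscr F}$ under countable intersections exactly as you indicate). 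The only cosmetic omissions are noting $X\in{\mathscr F}$ (so ${\mathscr F}\neq\emptyset$) in $(3)\Rightarrow(2)$, and the observation that a $z$-ultrafilter with c.i.p.\ is automatically closed under countable intersections, which you do invoke and which holds by the same maximality argument the paper records for $\sigma$-algebras.
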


\begin{theorem}\label{JEDC}
Let  $(X,{\mathscr O}, {\mathscr B})$ be a  first-countable realcompact topological  measurable space. Then there is no free ultrafilter in ${\mathscr B}$ with c.i.p.
\end{theorem}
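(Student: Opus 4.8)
The plan is to argue by contradiction, converting a hypothetical free ultrafilter into a Baire measure to which realcompactness applies via Lemma \ref{FRAYU}, and then to use first countability to locate a point at which the ultrafilter is forced to be fixed.

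First I would record the topological facts I need. Since $X$ is realcompact it is completely regular, hence $T_1$, so cozero-sets form a base and, being open, lie in ${\mathscr O}\subseteq{\mathscr B}$; moreover every zero-set and every Baire set then lies in ${\mathscr B}$, so ${\mathscr B}^*(X)\subseteq{\mathscr B}$ and the restriction of a measure on ${\mathscr B}$ to ${\mathscr B}^*(X)$ makes sense. Using first countability together with complete regularity, I would show that each point $x$ admits a decreasing sequence $U_1\supseteq U_2\supseteq\cdots$ of cozero neighbourhoods with $\bigcap_{n=1}^\infty U_n=\{x\}$: take a countable base of open neighbourhoods of $x$, shrink each to a cozero neighbourhood by complete regularity, and replace these by their running finite intersections (finite intersections of cozero-sets are cozero). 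That the $T_1$ property forces the intersection to be exactly $\{x\}$ is routine, since any $y\neq x$ is excluded by some base member contained in the open set $X\backslash\{y\}$.

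Next, suppose toward a contradiction that ${\mathscr F}$ is a free ultrafilter in ${\mathscr B}$ with c.i.p. As in the proof of Proposition \ref{JJHHC}, setting $\mu(B)=1$ for $B\in{\mathscr F}$ and $\mu(B)=0$ otherwise yields a non-trivial $\{0,1\}$-valued measure on ${\mathscr B}$. Restricting $\mu$ to ${\mathscr B}^*(X)$ gives a $\{0,1\}$-valued Baire measure $\nu$ which is non-trivial since $\nu(X)=\mu(X)=1$. Because $X$ is realcompact, Lemma \ref{FRAYU} guarantees that $\mathrm{supp}(\nu)\neq\emptyset$; fix a point $x\in\mathrm{supp}(\nu)$.

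Finally I would reach the contradiction. For the cozero neighbourhoods $U_n$ of $x$ produced in the first step, the definition of support gives $\nu(U_n)>0$, hence $\nu(U_n)=1$, and since each $U_n\in\mathrm{Coz}(X)\subseteq{\mathscr B}^*(X)$ this means $\mu(U_n)=1$, i.e. $U_n\in{\mathscr F}$ for every $n$. As an ultrafilter with c.i.p. is $\sigma$-complete, it is closed under countable intersections, so $\{x\}=\bigcap_{n=1}^\infty U_n\in{\mathscr F}$; this forces $x\in\bigcap{\mathscr F}$ and contradicts the freeness of ${\mathscr F}$. The only genuinely delicate point is the first step, producing a countable cozero neighbourhood base at $x$ whose intersection is precisely $\{x\}$, which is exactly where first countability and complete regularity are both used; the remainder is bookkeeping with the measure/ultrafilter dictionary of Proposition \ref{JJHHC} and the realcompactness characterisation of Lemma \ref{FRAYU}.
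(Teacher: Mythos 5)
Your argument is correct and is essentially the paper's own ``second approach'': convert the hypothetical free ultrafilter with c.i.p.\ into a $\{0,1\}$-valued Baire measure, invoke Lemma \ref{FRAYU} to get a point of the support, and use a countable decreasing cozero base at that point to force the ultrafilter to be fixed. The only cosmetic difference is that you close the argument via $\sigma$-completeness of the ultrafilter ($\{x\}=\bigcap U_n\in{\mathscr F}$) where the paper uses continuity from above of the measure ($\mu(C_n)\to\mu(\bigcap C_n)$), which amounts to the same thing.
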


\begin{proof}
We prove the theorem in two different ways. Our first approach, which is rather direct, is more topological; our second  approach makes use of the characterization given in Lemma \ref{FRAYU}.

\subsubsection*{First approach.} Suppose to the contrary that there exists a free  ultrafilter  ${\mathscr F}$ in ${\mathscr B}$  with c.i.p. Note that the collection ${\mathscr B}$ of all measurable sets can be considered as a base for a topology on $X$, as it is closed under finite intersections and covers $X$. Denote by ${\mathscr O}_{\mathscr B}$ the topology generated by ${\mathscr B}$ on $X$.  Since $(X,{\mathscr O})$ is Hausdorff and ${\mathscr O}\subseteq {\mathscr B}$, the topological space $(X,{\mathscr O}_{\mathscr B})$ is Hausdorff and therefore completely regular, as the elements of ${\mathscr B}$ are closed-open in $(X,{\mathscr O}_{\mathscr B})$. Let
\[\phi:\beta(X,{\mathscr O}_{\mathscr B})\rightarrow\beta(X,{\mathscr O})\]
continuously extend
\[\mathrm{id}_X:(X,{\mathscr O}_{\mathscr B})\rightarrow(X,{\mathscr O}).\]
Since
\[\{\mathrm{cl}_{\beta(X,{\mathscr O}_{\mathscr B})}F:F\in {\mathscr F}\}\]
has the finite  intersection property, as ${\mathscr F}$ has, by compactness we have
\[G=\bigcap\{\mathrm{cl}_{\beta(X,{\mathscr O}_{\mathscr B})}F:F\in {\mathscr F}\}\neq\emptyset.\]
Let $p\in G$. Note that since ${\mathscr F}$ is free we have $p\notin X$, as otherwise
\[p\in X\cap\mathrm{cl}_{\beta(X,{\mathscr O}_{\mathscr B})}F=F\]
for each $F\in{\mathscr F}$.

\medskip

\noindent {\bf Claim.} {\em If $V$ is an open neighborhood of $\phi(p)$ in $\beta(X,{\mathscr O})$ then $V\cap X\in {\mathscr F}$.}

\medskip

\noindent {\em Proof of the claim.} Suppose the contrary, i.e., suppose that $V\cap X\notin {\mathscr F}$. Note that $V\cap X\in{\mathscr O}\subseteq{\mathscr B}$. Since ${\mathscr F}$  is an ultrafilter, $V\cap X\cap F=\emptyset$ for some $F\in{\mathscr F}$. Since $V\cap X\in{\mathscr B}$ and $F\in {\mathscr B}$, the sets $V\cap X$ and $F$  are closed-open in $(X,{\mathscr O}_{\mathscr B})$, and therefore
\[\mathrm{cl}_{\beta(X,{\mathscr O}_{\mathscr B})}(V\cap X)\cap\mathrm{cl}_{\beta(X,{\mathscr O}_{\mathscr B})}F=\mathrm{cl}_{\beta(X,{\mathscr O}_{\mathscr B})}(V\cap X\cap F)=\emptyset.\]
By the choice of $p$ we have
\[p\notin\mathrm{cl}_{\beta(X,{\mathscr O}_{\mathscr B})}(V\cap X).\]
Let $W$ be an open neighborhood of $p$ in $\beta(X,{\mathscr O}_{\mathscr B})$ such that $W\cap V\cap X=\emptyset$. By continuity of $\phi$ there exists an open neighborhood $U$ of $p$ in $\beta(X,{\mathscr O}_{\mathscr B})$ such that $\phi(U)\subseteq V$. Then (since $\phi|X=\mathrm{id}_X$) we have
\[U\cap W\cap X=\phi(U\cap W\cap X)\subseteq\phi(U)\cap W\cap X\subseteq V\cap W\cap X=\emptyset.\]
But this is a contradiction, as $U\cap W$, being a non-empty open subset of  $\beta(X,{\mathscr O}_{\mathscr B})$, meets $X$. Thus $V\cap X\in {\mathscr F}$.

\medskip

\noindent Next, we show that $\phi(p)\notin X$. Suppose the contrary. By our assumption, there exists a countable base
\[\{V_n:n\in\mathbb{N}\}\]
at $\phi(p)$ in  $(X,{\mathscr O})$. By the claim, $V_n\in {\mathscr F}$ for each $n\in\mathbb{N}$. Now, since ${\mathscr F}$ has c.i.p., for each $F\in {\mathscr F}$ we have
\[F\cap\big\{\phi(p)\big\}=F\cap\bigcap_{n=1}^\infty V_n\neq\emptyset.\]
Thus $\phi(p)\in\bigcap{\mathscr F}$, which is a contradiction, as ${\mathscr F}$ is free. Therefore  $\phi(p)\in \beta(X,{\mathscr O})\backslash X$. Since, by our assumption $(X,{\mathscr O})$ is realcompact, there exists a zero-set $Z$ in $\beta(X,{\mathscr O})$ such that $\phi(p)\in Z$ and $Z\cap X=\emptyset$. Let $Z=f^{-1}(0)$ for some continuous $f: \beta(X,{\mathscr O})\rightarrow [0,1]$. Now, for each  $n\in\mathbb{N}$ the set $f^{-1}([0,1/n))$ is an open neighborhood of $\phi(p)$ in $\beta(X,{\mathscr O})$, thus by the claim
\[f^{-1}\big([0,1/n)\big)\cap X\in {\mathscr F}.\]
Since ${\mathscr F}$ has c.i.p.  we have
\[Z \cap X=f^{-1}(0)\cap X=\bigcap_{n=1}^\infty f^{-1}\big([0,1/n)\big)\cap X\neq\emptyset\]
which contradicts the choice of $Z$.

\subsubsection*{Second approach.} Suppose to the contrary that there exists a free  ultrafilter  ${\mathscr F}$ in ${\mathscr B}$  with c.i.p.  Define $\nu:{\mathscr B}\rightarrow\{0,1\}$ such that $\nu(B)=0$ if $B\notin {\mathscr F}$ and $\nu(B)=1$ if $B\in {\mathscr F}$. Then (as in the proof of Proposition \ref{JJHHC} (3)$\Rightarrow$(1)) $\nu$ is a measure. Note that ${\mathscr B}$ contains the set ${\mathscr B}^*(X)$ of Baire subsets of $X$ (as each zero-set in $X$, being a $G_\delta$, is contained in ${\mathscr B}$). Denote
\[\mu=\nu|{\mathscr B}^*(X):{\mathscr B}^*(X)\rightarrow\{0,1\}.\]
Then $\mu$ is a non-trivial Baire measure on $X$, and since we are assuming that $X$ is realcompact, by Lemma \ref{FRAYU} it has non-empty support. Let $x\in \mathrm{supp}(\mu)$ and let $\{C_n:n\in\mathbb{N}\}$ be a local base at $x$ in $X$. Without any loss of generality (since $X$ is completely regular) we may assume that $C_n\in\mathrm{Coz}(X)$ for each $n\in\mathbb{N}$ and $C_1\supseteq C_2\supseteq\cdots$. Then
\[\mu(C_n)\rightarrow\mu\Big(\bigcap_{n=1}^\infty C_n\Big).\]
But this is a contradiction, as (since $x\in\mathrm{supp}(\mu)$) $\mu(C_n)=1$ for each $n\in\mathbb{N}$, and since  ${\mathscr F}$ is free,
\[\bigcap_{n=1}^\infty C_n=\{x\}\notin{\mathscr F};\]
as otherwise, $x\in F$ for each $F\in{\mathscr F}$, as $\{x\}\cap F$ is non-empty, and thus (by our definition of $\nu$)
\[\mu\Big(\bigcap_{n=1}^\infty C_n\Big)=0.\]
\end{proof}

Obviously, every $n$-dimensional Euclidean space $\mathbb{R}^n$, where $n\in\mathbb{N}$, is realcompact. Thus, from Theorem \ref{JEDC} we obtain the following.

\begin{corollary}\label{JHEDC}
Let  $(\mathbb{R}^n, {\mathscr M})$ be the Lebesgue measurable space, where $n\in\mathbb{N}$. Then there is no free ultrafilter in ${\mathscr M}$ with c.i.p.
\end{corollary}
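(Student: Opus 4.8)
The plan is to recognize the statement as a direct application of Theorem \ref{JEDC}, so the only work is to exhibit $(\mathbb{R}^n,{\mathscr M})$ as the underlying measurable space of a first-countable realcompact topological measurable space and then invoke that theorem. Let ${\mathscr O}$ denote the usual Euclidean topology on $\mathbb{R}^n$. First I would check that $(\mathbb{R}^n,{\mathscr O},{\mathscr M})$ really is a topological measurable space, i.e.\ that ${\mathscr O}\subseteq{\mathscr M}$: every Euclidean-open set is a Borel set, and every Borel set is Lebesgue measurable, so each open set lies in ${\mathscr M}$, as required.

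Next I would verify the two substantive hypotheses of Theorem \ref{JEDC} for $(\mathbb{R}^n,{\mathscr O})$, namely first-countability and realcompactness. First-countability is immediate because the topology is metrizable: the balls $\{B(x,1/k):k\in\mathbb{N}\}$ form a countable local base at each point $x$. For realcompactness the cleanest route is the remark recorded in the introduction that every regular Lindel\"of space is realcompact; since $\mathbb{R}^n$ is second countable (hence Lindel\"of) and metrizable (hence regular), it is realcompact. (Alternatively one may appeal to the fact that a finite product of realcompact spaces is realcompact, $\mathbb{R}$ being realcompact.) This is the ``obvious'' realcompactness already asserted in the text preceding the corollary.

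With all three hypotheses in hand, Theorem \ref{JEDC} applies with $X=\mathbb{R}^n$, with ${\mathscr O}$ the Euclidean topology and ${\mathscr B}={\mathscr M}$, and yields at once that there is no free ultrafilter in ${\mathscr M}$ with c.i.p. There is no genuine obstacle here; the only point worth keeping straight is that Theorem \ref{JEDC} already constrains ultrafilters in the \emph{full} measure $\sigma$-algebra ${\mathscr B}$, which in this case is the Lebesgue $\sigma$-algebra ${\mathscr M}$ rather than merely the Borel or Baire subalgebra. Since ${\mathscr M}$ is strictly larger than the Borel $\sigma$-algebra, it is reassuring that the theorem covers this case directly, so no auxiliary argument comparing ${\mathscr M}$ with ${\mathscr B}^*(\mathbb{R}^n)$ is needed.
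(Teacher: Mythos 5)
Your proposal is correct and is exactly the paper's argument: the corollary is obtained by noting that $\mathbb{R}^n$ is a first-countable realcompact space whose Lebesgue $\sigma$-algebra contains the open sets, and then invoking Theorem \ref{JEDC}. You merely spell out the routine verifications (metrizability for first-countability, regular Lindel\"of for realcompactness, Borel $\subseteq$ Lebesgue for ${\mathscr O}\subseteq{\mathscr M}$) that the paper leaves implicit.
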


Recall that a cardinal  $\zeta$ is said to be {\em measurable} if there is a non-trivial  $\{0,1\}$-valued measure defined on the power set ${\mathscr P}(X)$ of a set $X$ of cardinality $\zeta$ which vanishes at singletons. The following is well known.

\begin{theorem}\label{KEDC}
In the measurable space $(X,{\mathscr B})$, let ${\mathscr B}={\mathscr P}(X)$. Then the following are equivalent:
\begin{itemize}
\item[\rm(1)] There is no free ultrafilter in ${\mathscr B}$ with c.i.p.
\item[\rm(2)] $X$ is of non-measurable cardinality.
\end{itemize}
\end{theorem}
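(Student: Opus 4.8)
The plan is to obtain the equivalence directly from Proposition \ref{JJHHC}, since in the special case $\mathscr{B}=\mathscr{P}(X)$ the hypotheses of that proposition collapse precisely onto the notion of a measurable cardinal. Condition (1) of the present theorem is verbatim condition (1) of Proposition \ref{JJHHC}, so it suffices to show that condition (2) here (that $X$ has non-measurable cardinality) is equivalent to condition (2) of Proposition \ref{JJHHC}. No new ultrafilter construction is needed; everything reduces to reconciling two pieces of terminology.

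The key step is to reconcile the phrase ``whose null sets cover $X$'' in Proposition \ref{JJHHC} with the phrase ``vanishes at singletons'' in the definition of a measurable cardinal. Here I would use that $\mathscr{B}=\mathscr{P}(X)$, so every singleton is measurable. For a $\{0,1\}$-valued (hence monotone) measure $\mu$ this gives the chain of equivalences: the null sets of $\mu$ cover $X$ iff every point lies in some null set iff $\mu(\{x\})=0$ for every $x\in X$, i.e. iff $\mu$ vanishes at singletons. Thus in this setting the two phrases single out exactly the same class of measures.

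With this identification in hand, condition (2) of Proposition \ref{JJHHC} reads: every $\{0,1\}$-valued measure on $\mathscr{P}(X)$ that vanishes at singletons is identically $0$. This is precisely the statement that there is no non-trivial $\{0,1\}$-valued measure on $\mathscr{P}(X)$ vanishing at singletons, which by definition means that the cardinal $|X|$ is not measurable, i.e. $X$ is of non-measurable cardinality. (One should also note the trivial point that a witnessing measure on the power set of \emph{any} set of cardinality $|X|$ transports, via a bijection, to a witnessing measure on $\mathscr{P}(X)$ itself, so the two formulations of measurability agree.) Hence condition (2) here coincides with condition (2) of Proposition \ref{JJHHC}, and the equivalence (1)$\Leftrightarrow$(2) follows at once.

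I expect the only genuine subtlety to be the bookkeeping of definitions rather than any analytic difficulty: specifically, pinning down that ``null sets cover $X$'' and ``vanishes at singletons'' coincide exactly when all subsets are measurable, and that ``non-trivial'' in the definition of measurable cardinal means $\mu\not\equiv 0$, matching the conclusion $\mu\equiv 0$ in Proposition \ref{JJHHC}. Once these identifications are stated explicitly there is no further computation, so the main ``obstacle'' is conceptual clarity about the special role of $\mathscr{B}=\mathscr{P}(X)$, not any real obstruction.
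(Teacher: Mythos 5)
Your reduction is correct and complete: since the paper states Theorem \ref{KEDC} as ``well known'' and gives no proof, there is nothing to compare against, but deriving it from Proposition \ref{JJHHC} is exactly the natural route. The two identifications you isolate are both valid --- for a $\{0,1\}$-valued (monotone) measure on ${\mathscr P}(X)$ the null sets cover $X$ if and only if $\mu(\{x\})=0$ for all $x$, and transporting a witnessing measure along a bijection shows that ``some set of cardinality $|X|$ carries such a measure'' is the same as ``${\mathscr P}(X)$ itself does'' --- so condition (2) of Proposition \ref{JJHHC} specializes verbatim to the non-measurability of $|X|$, and the equivalence with (1) follows.
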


Our next theorem  is sort of converse to Theorem \ref{JEDC}. We need, however, some definitions and some lemmas first.

Let $X$ be a completely regular topological space. For an open subset $U$ of $X$ the {\em extension of $U$ to $\beta X$} is defined by
\[\mathrm{Ex}_X U=\beta X\backslash\mathrm{cl}_{\beta X}(X\backslash U).\]

The following lemma is well known (see Lemma
7.1.13 of \cite{E} or  Lemma 3.1 of \cite{vD}).

\begin{lemma}\label{TYU}
Let $X$ be a completely regular topological space and let $U$ and $V$ be open subsets of $X$. Then
\begin{itemize}
\item[\rm(1)] $X\cap\mathrm{Ex}_XU=U$, and thus $\mathrm{cl}_{\beta X}\mathrm{Ex}_XU=\mathrm{cl}_{\beta X}U$.
\item[\rm(2)] $\mathrm{Ex}_X(U\cap V)=\mathrm{Ex}_XU\cap\mathrm{Ex}_XV$.
\end{itemize}
\end{lemma}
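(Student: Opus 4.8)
The plan is to prove both parts by direct computation, reducing everything to three elementary facts about the dense embedding $X\subseteq\beta X$: that the closure of a set $A\subseteq X$ taken in the subspace $X$ equals $X\cap\mathrm{cl}_{\beta X}A$; that the closure of an open subset of $\beta X$ is unchanged if we first intersect it with the dense set $X$; and that the closure operator commutes with finite unions while complementation converts unions into intersections (De Morgan). None of these use the defining properties of $\beta X$ beyond the density of $X$, so I expect no genuine obstacle here; the content is purely set-theoretic manipulation of closures and complements, and the only thing to watch is that the union identities be used for \emph{finite} unions only, which is exactly what the statement requires.

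For part (1), I would first unwind the definition:
\[ X\cap\mathrm{Ex}_XU=X\cap\big(\beta X\backslash\mathrm{cl}_{\beta X}(X\backslash U)\big)=X\backslash\mathrm{cl}_{\beta X}(X\backslash U). \]
Since $X$ carries the subspace topology we have $X\cap\mathrm{cl}_{\beta X}(X\backslash U)=\mathrm{cl}_X(X\backslash U)$, and because $U$ is open in $X$ its complement $X\backslash U$ is already closed in $X$; hence this intersection equals $X\backslash U$, and subtracting it from $X$ returns $U$. This gives $X\cap\mathrm{Ex}_XU=U$.

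For the second assertion of (1), note that $\mathrm{Ex}_XU$ is by definition open in $\beta X$, being the complement of a closed set. The inclusion $U=X\cap\mathrm{Ex}_XU\subseteq\mathrm{Ex}_XU$ already yields $\mathrm{cl}_{\beta X}U\subseteq\mathrm{cl}_{\beta X}\mathrm{Ex}_XU$. For the reverse inclusion I would invoke the standard fact that for an open set $W$ and a dense set $D$ one has $\mathrm{cl}(W)=\mathrm{cl}(W\cap D)$: given a point of $\mathrm{cl}(W)$ and a neighborhood of it, that neighborhood meets the open set $W$, hence meets the dense set $D$ inside $W$. Applying this with $W=\mathrm{Ex}_XU$ and $D=X$, and using $\mathrm{Ex}_XU\cap X=U$ from the first part, gives $\mathrm{cl}_{\beta X}\mathrm{Ex}_XU=\mathrm{cl}_{\beta X}(\mathrm{Ex}_XU\cap X)=\mathrm{cl}_{\beta X}U$.

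Part (2) is then a formal computation. Starting from the definition and using $X\backslash(U\cap V)=(X\backslash U)\cup(X\backslash V)$ together with the distributivity of closure over finite unions, I would write
\[ \mathrm{Ex}_X(U\cap V)=\beta X\backslash\mathrm{cl}_{\beta X}\big((X\backslash U)\cup(X\backslash V)\big)=\beta X\backslash\big(\mathrm{cl}_{\beta X}(X\backslash U)\cup\mathrm{cl}_{\beta X}(X\backslash V)\big), \]
after which De Morgan's law converts the complement of this union into the intersection $\mathrm{Ex}_XU\cap\mathrm{Ex}_XV$, completing the proof.
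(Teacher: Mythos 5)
Your proof is correct. The paper itself does not prove this lemma; it simply records it as well known and cites Lemma 7.1.13 of Engelking and Lemma 3.1 of van Douwen, so there is no in-paper argument to compare against. Your computation supplies exactly the standard argument those references contain: part (1) reduces to the fact that $X\cap\mathrm{cl}_{\beta X}(X\backslash U)=\mathrm{cl}_X(X\backslash U)=X\backslash U$ for $U$ open in $X$, together with the observation that an open set and its intersection with a dense set have the same closure; part (2) is the purely formal combination of $X\backslash(U\cap V)=(X\backslash U)\cup(X\backslash V)$, additivity of closure over finite unions, and De Morgan's law. As you note, only the density of $X$ in $\beta X$ is used, so the argument is complete and, if anything, slightly more general than the stated setting.
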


The following lemma is proved by E. G. Skljarenko in \cite{S}. It is
rediscovered by E. K. van Douwen in \cite{vD}.

\begin{lemma}\label{HJH} Let $X$ be a completely regular topological space and let $U$ be an open subset of $X$. Then
\[\mathrm{bd}_{\beta X}\mathrm{Ex}_XU=\mathrm{cl}_{\beta X}\mathrm{bd}_XU.\]
\end{lemma}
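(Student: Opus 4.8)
The plan is to prove the identity $\mathrm{bd}_{\beta X}\mathrm{Ex}_XU=\mathrm{cl}_{\beta X}\mathrm{bd}_XU$ by establishing the two inclusions separately, using Lemma \ref{TYU} as the main computational tool together with the basic fact that the boundary of an open set equals the closure minus the interior. First I would rewrite both sides in terms of closures. Since $\mathrm{Ex}_XU$ is open in $\beta X$, its boundary is $\mathrm{bd}_{\beta X}\mathrm{Ex}_XU=\mathrm{cl}_{\beta X}\mathrm{Ex}_XU\setminus\mathrm{Ex}_XU$, and by Lemma \ref{TYU}(1) we may replace $\mathrm{cl}_{\beta X}\mathrm{Ex}_XU$ by $\mathrm{cl}_{\beta X}U$. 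Meanwhile $\mathrm{bd}_XU=\mathrm{cl}_XU\setminus U$ (as $U$ is open), so its closure in $\beta X$ is contained in $\mathrm{cl}_{\beta X}U$. The goal then becomes showing $\mathrm{cl}_{\beta X}U\setminus\mathrm{Ex}_XU=\mathrm{cl}_{\beta X}\mathrm{bd}_XU$.

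For the inclusion $\mathrm{cl}_{\beta X}\mathrm{bd}_XU\subseteq\mathrm{bd}_{\beta X}\mathrm{Ex}_XU$, I would argue that $\mathrm{bd}_XU=\mathrm{cl}_XU\cap\mathrm{cl}_X(X\setminus U)$, so that a point of $\mathrm{bd}_XU$ lies in both $\mathrm{cl}_{\beta X}U$ and $\mathrm{cl}_{\beta X}(X\setminus U)=\beta X\setminus\mathrm{Ex}_XU$; taking closures in $\beta X$ keeps the limit points in both closed sets, hence in $\mathrm{cl}_{\beta X}U\setminus\mathrm{Ex}_XU$. For the reverse inclusion I would take $p\in\mathrm{cl}_{\beta X}U$ with $p\notin\mathrm{Ex}_XU$, so that $p\in\mathrm{cl}_{\beta X}(X\setminus U)$. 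The task is to show $p\in\mathrm{cl}_{\beta X}\mathrm{bd}_XU$, i.e. that every open neighborhood $W$ of $p$ in $\beta X$ meets $\mathrm{bd}_XU$. Since $p$ is in the closures of both $U$ and $X\setminus U$, the trace $W\cap X$ meets both $U$ and $X\setminus U$; the delicate point is converting ``$W\cap X$ meets both $U$ and its complement'' into ``$W$ meets the boundary $\mathrm{bd}_XU$,'' which does not follow from connectedness alone.

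The main obstacle will be precisely this last step, and I expect to handle it by passing through the extension operator rather than working set-theoretically inside $X$. The cleaner route is to exploit Lemma \ref{TYU}(2): writing $\mathrm{int}_X\mathrm{cl}_XU$ and the open set $X\setminus\mathrm{cl}_XU$, one can express the complement of $\mathrm{cl}_{\beta X}\mathrm{bd}_XU$ as a union of two extensions, namely $\mathrm{Ex}_X(\mathrm{int}_X\mathrm{cl}_XU)$ and $\mathrm{Ex}_X(X\setminus\mathrm{cl}_XU)$, since in $X$ the set $X\setminus\mathrm{bd}_XU$ is exactly the disjoint union of these two open sets and $\mathrm{Ex}_X$ commutes with the relevant finite intersections while $X\setminus\mathrm{cl}_{\beta X}\mathrm{bd}_XU=\mathrm{Ex}_X(X\setminus\mathrm{bd}_XU)$ follows from the definition of $\mathrm{Ex}_X$. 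Comparing this description of $\beta X\setminus\mathrm{cl}_{\beta X}\mathrm{bd}_XU$ with $\mathrm{Ex}_XU\cup(\beta X\setminus\mathrm{cl}_{\beta X}U)$, both of which are the ``interior'' and ``exterior'' portions, should yield equality of the two boundaries directly. The verification that $\mathrm{Ex}_X(\mathrm{int}_X\mathrm{cl}_XU)$ agrees with $\mathrm{Ex}_XU$ up to the relevant closures is where Lemma \ref{TYU}(1) is invoked a second time, since $U$ and $\mathrm{int}_X\mathrm{cl}_XU$ have the same closure in $\beta X$.
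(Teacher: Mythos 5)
The paper itself offers no proof of this lemma---it is quoted from Skljarenko \cite{S} and van Douwen \cite{vD}---so your argument has to stand on its own. Its first half does: since $\mathrm{Ex}_XU$ is open in $\beta X$ and $\mathrm{cl}_{\beta X}\mathrm{Ex}_XU=\mathrm{cl}_{\beta X}U$ by Lemma \ref{TYU}(1), the left-hand side is $\mathrm{cl}_{\beta X}U\setminus\mathrm{Ex}_XU=\mathrm{cl}_{\beta X}U\cap\mathrm{cl}_{\beta X}(X\backslash U)$, and the inclusion $\mathrm{cl}_{\beta X}\mathrm{bd}_XU\subseteq\mathrm{cl}_{\beta X}U\cap\mathrm{cl}_{\beta X}(X\backslash U)$ is immediate. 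You also correctly flag the delicate point. But your proposed resolution of it fails, for two reasons. First, the decomposition is wrong: $X\backslash\mathrm{bd}_XU$ equals $U\cup(X\backslash\mathrm{cl}_XU)$, not $\mathrm{int}_X\mathrm{cl}_XU\cup(X\backslash\mathrm{cl}_XU)$; the latter is $X\backslash\mathrm{bd}_X(\mathrm{cl}_XU)$, which is strictly larger whenever $\mathrm{int}_X\mathrm{cl}_XU$ meets $\mathrm{bd}_XU$ (e.g.\ $U=(0,1)\cup(1,2)$ in $\mathbb{R}$, where the point $1$ belongs to $\mathrm{int}_X\mathrm{cl}_XU\cap\mathrm{bd}_XU$). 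Second, and fatally, the step ``$\mathrm{Ex}_X$ of the disjoint union is the union of the two extensions'' is the theorem in disguise: the inclusion $\mathrm{Ex}_X(A\cup B)\supseteq\mathrm{Ex}_XA\cup\mathrm{Ex}_XB$ is free, but the reverse inclusion for $A=U$, $B=X\backslash\mathrm{cl}_XU$ unwinds, via the definition of $\mathrm{Ex}_X$, to $\mathrm{cl}_{\beta X}(X\backslash U)\cap\mathrm{cl}_{\beta X}U\subseteq\mathrm{cl}_{\beta X}(\mathrm{cl}_XU\backslash U)$, which is precisely the hard inclusion. Lemma \ref{TYU}(2) concerns intersections and gives no purchase here; indeed $\mathrm{Ex}_X$ does not distribute over unions of disjoint open sets in general.

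What the hard inclusion genuinely requires is the characteristic property of $\beta X$ recalled in the introduction, namely $\mathrm{cl}_{\beta X}(Z\cap S)=\mathrm{cl}_{\beta X}Z\cap\mathrm{cl}_{\beta X}S$ for zero-sets $Z,S$ of $X$. One workable route: given $Z=\mathrm{Z}(g)\in{\mathscr Z}(X)$ with $\mathrm{bd}_XU\subseteq Z$, define $h$ on $X$ by $h=g$ on $\mathrm{cl}_XU$ and $h=-g$ on $X\backslash U$; this is consistent and continuous because the two closed pieces cover $X$ and overlap exactly in $\mathrm{bd}_XU\subseteq\mathrm{Z}(g)$. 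Then $Z_1=\{h\geq 0\}$ and $Z_2=\{h\leq 0\}$ are zero-sets with $U\subseteq Z_1$, $X\backslash U\subseteq Z_2$ and $Z_1\cap Z_2=Z$, so $\mathrm{cl}_{\beta X}U\cap\mathrm{cl}_{\beta X}(X\backslash U)\subseteq\mathrm{cl}_{\beta X}Z_1\cap\mathrm{cl}_{\beta X}Z_2=\mathrm{cl}_{\beta X}Z$; intersecting over all such $Z$ (any point outside $\mathrm{cl}_{\beta X}\mathrm{bd}_XU$ is excluded by some such $Z$, by complete regularity of $\beta X$) yields $\mathrm{cl}_{\beta X}\mathrm{bd}_XU$. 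Some appeal of this kind to the zero-set structure of $\beta X$ cannot be avoided, and it is the missing idea in your proposal.
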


\begin{lemma}\label{JEUDC}
Let  $X$ be a completely regular topological space. If $B$ is a subset of $X$ with compact boundary then
\[\mathrm{cl}_{\beta X}B\backslash X=\mathrm{Ex}_X(\mathrm{int}_XB)\backslash X.\]
\end{lemma}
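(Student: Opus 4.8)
The plan is to write $U=\mathrm{int}_X B$ and to prove the two inclusions separately, in each case reducing to the closure of $U$ in $\beta X$ and exploiting the compactness of the boundary to control what happens off $X$. The first thing I would record is the behaviour of $\mathrm{Ex}_X U$ outside $X$. Since $U=\mathrm{int}_X B$ is open, $\mathrm{bd}_X U=\mathrm{cl}_X U\backslash U\subseteq\mathrm{cl}_X B\backslash\mathrm{int}_X B=\mathrm{bd}_X B$, and as $\mathrm{bd}_X U$ is closed in $X$ while $\mathrm{bd}_X B$ is compact, it follows that $\mathrm{bd}_X U$ is itself compact. Being compact it is closed in $\beta X$, so $\mathrm{cl}_{\beta X}\mathrm{bd}_X U=\mathrm{bd}_X U\subseteq X$. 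Lemma~\ref{HJH} then gives $\mathrm{bd}_{\beta X}\mathrm{Ex}_X U=\mathrm{cl}_{\beta X}\mathrm{bd}_X U\subseteq X$, which is the crucial observation: the boundary of $\mathrm{Ex}_X U$ in $\beta X$ meets $\beta X\backslash X$ in nothing. Combined with Lemma~\ref{TYU}(1), which gives $\mathrm{cl}_{\beta X}\mathrm{Ex}_X U=\mathrm{cl}_{\beta X}U$, this yields the key identity
\[\mathrm{Ex}_X U\backslash X=\mathrm{cl}_{\beta X}\mathrm{Ex}_X U\backslash X=\mathrm{cl}_{\beta X}U\backslash X,\]
since $\mathrm{Ex}_X U$ and its closure differ only by $\mathrm{bd}_{\beta X}\mathrm{Ex}_X U$, which lies in $X$.

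For the inclusion $\mathrm{Ex}_X(\mathrm{int}_X B)\backslash X\subseteq\mathrm{cl}_{\beta X}B\backslash X$ I would simply note $U\subseteq B$, so $\mathrm{cl}_{\beta X}U\subseteq\mathrm{cl}_{\beta X}B$; combined with the key identity above this gives $\mathrm{Ex}_X U\backslash X=\mathrm{cl}_{\beta X}U\backslash X\subseteq\mathrm{cl}_{\beta X}B\backslash X$. For the reverse inclusion I would use that $B\subseteq\mathrm{cl}_X B=\mathrm{int}_X B\cup\mathrm{bd}_X B=U\cup\mathrm{bd}_X B$, whence $\mathrm{cl}_{\beta X}B\subseteq\mathrm{cl}_{\beta X}U\cup\mathrm{cl}_{\beta X}\mathrm{bd}_X B$. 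As $\mathrm{bd}_X B$ is compact, hence closed in $\beta X$, we have $\mathrm{cl}_{\beta X}\mathrm{bd}_X B=\mathrm{bd}_X B\subseteq X$; deleting $X$ therefore gives $\mathrm{cl}_{\beta X}B\backslash X\subseteq\mathrm{cl}_{\beta X}U\backslash X=\mathrm{Ex}_X U\backslash X$, again by the key identity. The two inclusions together prove the lemma.

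The only genuinely delicate point is the passage from $\mathrm{cl}_{\beta X}U$ back to $\mathrm{Ex}_X U$: a priori $\mathrm{Ex}_X U$ is strictly smaller than $\mathrm{cl}_{\beta X}\mathrm{Ex}_X U=\mathrm{cl}_{\beta X}U$, the difference being the boundary $\mathrm{bd}_{\beta X}\mathrm{Ex}_X U$. I expect the main work to be confirming, via Lemma~\ref{HJH} and the compactness of the boundary, that this difference is entirely absorbed into $X$, so that off $X$ the open set $\mathrm{Ex}_X U$, its closure, and $\mathrm{cl}_{\beta X}U$ all coincide. Everything else is a routine manipulation of closures together with the elementary fact that a compact subset of $X$ is already closed in $\beta X$.
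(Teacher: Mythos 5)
Your proof is correct and follows essentially the same route as the paper's: both reduce $\mathrm{cl}_{\beta X}B\backslash X$ to $\mathrm{cl}_{\beta X}(\mathrm{int}_XB)\backslash X$ via compactness of $\mathrm{bd}_XB$, and then use Lemma \ref{TYU}(1) together with Lemma \ref{HJH} and the inclusion $\mathrm{bd}_X(\mathrm{int}_XB)\subseteq\mathrm{bd}_XB$ to identify this with $\mathrm{Ex}_X(\mathrm{int}_XB)\backslash X$. The only cosmetic difference is that you argue by two inclusions where the paper writes a single chain of equalities.
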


\begin{proof}
Since $\mathrm{bd}_X B$ is compact, we have
\[\mathrm{cl}_{\beta X}\mathrm{bd}_X B\subseteq\mathrm{bd}_X B\subseteq X,\]
and therefore
\begin{eqnarray*}
\mathrm{cl}_{\beta X}B\backslash X&=&\mathrm{cl}_{\beta X}(\mathrm{int}_XB\cup\mathrm{bd}_XB)\backslash X\\&=&(\mathrm{cl}_{\beta X}\mathrm{int}_XB\cup\mathrm{cl}_{\beta X}\mathrm{bd}_XB)\backslash X\\&=&(\mathrm{cl}_{\beta X}\mathrm{int}_XB\backslash X)\cup (\mathrm{cl}_{\beta X}\mathrm{bd}_XB \backslash X)
=\mathrm{cl}_{\beta X}\mathrm{int}_XB\backslash X.
\end{eqnarray*}
By Lemma \ref{TYU} we have
\begin{eqnarray*}
\mathrm{cl}_{\beta X}\mathrm{int}_XB\backslash X&=&\mathrm{cl}_{\beta X}\mathrm{Ex}_X(\mathrm{int}_XB)\backslash X\\&=&\big(\mathrm{Ex}_X(\mathrm{int}_XB)\cup\mathrm{bd}_{\beta X}\mathrm{Ex}_X(\mathrm{int}_XB)\big)\backslash X\\&=&\big(\mathrm{Ex}_X(\mathrm{int}_XB)\backslash X\big)\cup\big(\mathrm{bd}_{\beta X}\mathrm{Ex}_X(\mathrm{int}_XB)\backslash X\big).
\end{eqnarray*}
But by Lemma \ref{HJH}, we have
\[\mathrm{bd}_{\beta X}\mathrm{Ex}_X(\mathrm{int}_XB)=\mathrm{cl}_{\beta X}\mathrm{bd}_X(\mathrm{int}_XB).\]
On the other hand $\mathrm{bd}_X (\mathrm{int}_XB)\subseteq\mathrm{bd}_X B$. Thus
\[\mathrm{cl}_{\beta X}\mathrm{bd}_X (\mathrm{int}_XB)\subseteq\mathrm{bd}_X B\subseteq X.\]
Combining these, we obtain the result.
\end{proof}

Note that if $U$ is an open subset of a completely regular topological space $X$, then (since $X$ is dense in $\beta X$) we have
\[\mathrm{cl}_{\beta X}U=\mathrm{cl}_{\beta X}(U\cap X).\]
We use this simple observation in the following.

\begin{theorem}\label{JUDC}
Let $(X,{\mathscr O}, {\mathscr B})$ be a completely regular topological  measurable  space. Suppose that each $B\in {\mathscr B}$ has compact boundary in $X$. If there is no free ultrafilter in ${\mathscr B}$ with c.i.p. then $X$ is realcompact.
\end{theorem}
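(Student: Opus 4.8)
The plan is to prove the contrapositive: assuming $X$ is not realcompact, I will manufacture a free ultrafilter in ${\mathscr B}$ with c.i.p., contradicting the hypothesis. Since $X$ is not realcompact, the zero-set characterization of realcompactness recalled in the introduction furnishes a point $p\in\beta X\backslash X$ such that every zero-set $Z$ of $\beta X$ with $p\in Z$ meets $X$ (equivalently, $p\in\upsilon X\backslash X$). All the work then consists of showing that
\[{\mathscr F}=\big\{B\in{\mathscr B}:p\notin\mathrm{cl}_{\beta X}(X\backslash B)\big\}\]
is a free ultrafilter in ${\mathscr B}$ with c.i.p.

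That ${\mathscr F}$ is a proper filter is routine: $X\in{\mathscr F}$ while $\emptyset\notin{\mathscr F}$; closure under supersets follows from monotonicity of $\mathrm{cl}_{\beta X}$ applied to $X\backslash B'\subseteq X\backslash B$; and closure under finite intersections follows from $\mathrm{cl}_{\beta X}(X\backslash B_1)\cup\mathrm{cl}_{\beta X}(X\backslash B_2)=\mathrm{cl}_{\beta X}(X\backslash(B_1\cap B_2))$. Freeness is equally direct: given $x\in X$, complete regularity of the compact space $\beta X$ produces a continuous $f:\beta X\to[0,1]$ with $f(p)=0$ and $f(x)=1$; then $B=f^{-1}([0,1/2))\cap X$ lies in ${\mathscr O}\subseteq{\mathscr B}$, omits $x$, and satisfies $p\notin\mathrm{cl}_{\beta X}(X\backslash B)$ because $X\backslash B\subseteq f^{-1}([1/2,1])$, so $B\in{\mathscr F}$; hence $\bigcap{\mathscr F}=\emptyset$.

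The main obstacle is verifying that ${\mathscr F}$ is an \emph{ultrafilter}, and this is exactly where the compact-boundary hypothesis enters. For a filter in a $\sigma$-algebra it suffices to establish the dichotomy $B\in{\mathscr F}$ or $X\backslash B\in{\mathscr F}$ for every $B\in{\mathscr B}$, i.e. $p\notin\mathrm{cl}_{\beta X}B\cap\mathrm{cl}_{\beta X}(X\backslash B)$. I will in fact prove the stronger containment $\mathrm{cl}_{\beta X}B\cap\mathrm{cl}_{\beta X}(X\backslash B)\subseteq X$. Both $B$ and $X\backslash B$ share the boundary $\mathrm{bd}_XB$, which is compact by hypothesis, so Lemma \ref{JEUDC} gives $\mathrm{cl}_{\beta X}B\backslash X=\mathrm{Ex}_X(\mathrm{int}_XB)\backslash X$ and, applied to $X\backslash B$, the analogous identity. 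Intersecting these two and using $\mathrm{int}_XB\cap\mathrm{int}_X(X\backslash B)=\emptyset$ together with Lemma \ref{TYU}(2) collapses the intersection (off $X$) to $\mathrm{Ex}_X(\emptyset)=\emptyset$. Since $p\notin X$, the dichotomy follows, and ${\mathscr F}$ is maximal.

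Finally I will check c.i.p. Given $B_1,B_2,\ldots\in{\mathscr F}$, for each $n$ the point $p$ lies off the closed set $\mathrm{cl}_{\beta X}(X\backslash B_n)$, so complete regularity of $\beta X$ yields a continuous $g_n:\beta X\to[0,1]$ with $g_n(p)=0$ and $g_n\equiv 1$ on $\mathrm{cl}_{\beta X}(X\backslash B_n)$; then $Z_n=g_n^{-1}(0)$ is a zero-set of $\beta X$ with $p\in Z_n$ and $Z_n\cap X\subseteq B_n$. Hence $Z=\bigcap_{n=1}^\infty Z_n$ is again a zero-set of $\beta X$ containing $p$, so by the choice of $p$ (i.e. $p\in\upsilon X$) we have $Z\cap X\neq\emptyset$; as $Z\cap X\subseteq\bigcap_{n=1}^\infty B_n$, the countable intersection is non-empty. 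Thus ${\mathscr F}$ is a free ultrafilter in ${\mathscr B}$ with c.i.p., the desired contradiction, and $X$ must be realcompact. I expect the ultrafilter step to be the crux, since it is the sole place the compact-boundary assumption is used and it is precisely what upgrades the otherwise-merely-a-filter ${\mathscr F}$ to maximality.
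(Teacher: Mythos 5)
Your proof is correct and follows essentially the same route as the paper: take $p\in\upsilon X\backslash X$, build from it a free ultrafilter in ${\mathscr B}$ with c.i.p.\ using Lemma \ref{JEUDC} as the compact-boundary input and $p\in\upsilon X$ for the countable intersection property. The only (harmless) difference is that you define ${\mathscr F}$ by $p\notin\mathrm{cl}_{\beta X}(X\backslash B)$ rather than $p\in\mathrm{cl}_{\beta X}B$, which makes the filter axioms automatic and shifts the use of Lemma \ref{JEUDC} to the maximality step, whereas the paper uses it for closure under finite intersections; once the dichotomy is established the two filters coincide.
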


\begin{proof}
Suppose the contrary, i.e., suppose that $X$ is not realcompact. Then $X\neq\upsilon X$. Let $p\in \upsilon X\backslash X$. Let
\[{\mathscr F}=\{B\in{\mathscr B}:p\in\mathrm{cl}_{\beta X}B \}.\]
We show that ${\mathscr F}$ is  a free ultrafilter in ${\mathscr B}$ with c.i.p., contradicting  our assumption.  Note that $\emptyset\neq{\mathscr F}\subseteq{\mathscr B}$ and $\emptyset\notin {\mathscr F}$. Suppose that $F,G\in {\mathscr F}$. Then, by Lemmas \ref{TYU} and \ref{JEUDC} we have
\begin{eqnarray*}
p&\in&(\mathrm{cl}_{\beta X}F\cap\mathrm{cl}_{\beta X} G)\backslash X\\&=&(\mathrm{cl}_{\beta X}F \backslash X)\cap(\mathrm{cl}_{\beta X} G \backslash X)\\&=&\big(\mathrm{Ex}_X(\mathrm{int}_XF)\backslash X\big)\cap\big(\mathrm{Ex}_X(\mathrm{int}_XG)\backslash X\big)\\&=& \big(\mathrm{Ex}_X(\mathrm{int}_XF)\cap\mathrm{Ex}_X(\mathrm{int}_XG)\big)\backslash X\\&=&\mathrm{Ex}_X(\mathrm{int}_XF\cap\mathrm{int}_XG)\backslash X\\&=& \mathrm{Ex}_X\big(\mathrm{int}_X(F\cap G)\big)\backslash X=\mathrm{cl}_{\beta X}(F\cap G)\backslash X.
\end{eqnarray*}
Therefore $p\in\mathrm{cl}_{\beta X}(F\cap G)\backslash X$ and thus $ F\cap G\in {\mathscr F}$. Next, suppose that $F\subseteq B$ for some $B\in {\mathscr B}$ and $F\in {\mathscr F}$. Then
\[p\in\mathrm{cl}_{\beta X}F\subseteq\mathrm{cl}_{\beta X}B\]
and thus $B\in {\mathscr F}$. This shows that ${\mathscr F}$ is  a filter in ${\mathscr B}$. To show that ${\mathscr F}$ is an ultrafilter, let $B\in {\mathscr B}$ be such that $B\cap F$ is non-empty for each $F\in {\mathscr F}$. If  $B\notin {\mathscr F}$ then $p\notin\mathrm{cl}_{\beta X} B$. Thus $p\in\mathrm{cl}_{\beta X}(X\backslash  B)$, i.e., $X\backslash  B\in {\mathscr F}$. But this is not possible, as $X\backslash  B$ misses $B$. Therefore $B\in {\mathscr F}$. To show that ${\mathscr F}$ is free, let $x\in X$. Then (since $p\notin X$) there exist some disjoint open neighborhoods $U$ and $V$ of $p$ and $x$ in $\beta X$, respectively. Then \[p\in\mathrm{cl}_{\beta X} U=\mathrm{cl}_{\beta X} (U\cap X)\]
and thus (since $U\cap X\in {\mathscr O}\subseteq{\mathscr B}$) we have $x\notin U\cap X\in {\mathscr F}$. Therefore $x\notin\bigcap{\mathscr F}$. Thus $\bigcap{\mathscr F}=\emptyset$ and  ${\mathscr F}$ is free. To show that  ${\mathscr F}$ has c.i.p., let $F_1,F_2,\ldots\in {\mathscr F}$. Then $p\in\mathrm{cl}_{\beta X} F_n$ for each $n\in\mathbb{N}$ and thus, since by Lemma \ref{JEUDC} we have
\[\mathrm{cl}_{\beta X}F_n\backslash X=\mathrm{Ex}_X(\mathrm{int}_XF_n)\backslash X\]
it follows that $p\in\mathrm{Ex}_X(\mathrm{int}_XF_n)$. For each $n\in\mathbb{N}$, let $f_n:\beta X\rightarrow[0,1]$ be continuous and such that
\[f_n(p)=0\mbox{ and }f_n|\big(\beta X\backslash\mathrm{Ex}_X(\mathrm{int}_XF_n)\big)\equiv 1.\]
Then
\[p\in Z=\bigcap_{n=1}^\infty\mathrm{Z}(f_n)\in {\mathscr Z}(\beta X).\]
If $Z\cap X=\emptyset$ then $Z\subseteq \beta X\backslash\upsilon X$, which is a contradiction, as $p\in\upsilon X$.  Thus, using Lemma \ref{TYU} we have
\begin{eqnarray*}
\emptyset\neq Z\cap X&=&\bigcap_{n=1}^\infty\mathrm{Z}(f_n)\cap X\\&\subseteq&\bigcap_{n=1}^\infty\mathrm{Ex}_X(\mathrm{int}_XF_n) \cap X=\bigcap_{n=1}^\infty\mathrm{int}_XF_n \subseteq\bigcap_{n=1}^\infty F_n.
\end{eqnarray*}
Therefore
\[\bigcap_{n=1}^\infty F_n\neq\emptyset.\]
This  show that  ${\mathscr F}$ has c.i.p.
\end{proof}

\begin{remark}
In the case when $X$ is normal, using Lemma \ref{FRAYU}, one can give an alternative proof for Theorem \ref{JUDC}. To show this assume that  there exists a non-trivial $\{0,1\}$-valued Baire measure $\nu$ on $X$ whose support is empty. Let
\[{\mathscr F}=\big\{B\in {\mathscr B}:\nu^*(X\backslash B)=0\big\}\]
in which $\nu^*$ is the outer measure induced by $\nu$.  We verify that ${\mathscr F}$ is a free ultrafilter in ${\mathscr B}$ with c.i.p. Obviously, ${\mathscr F}$ is non-empty (as $X\in {\mathscr F}$) and $\emptyset\notin {\mathscr F}$ (as $\nu$ is non-trivial). Note that for any $F,G\in{\mathscr F}$ since
\[\nu^*\big(X\backslash (F\cap G)\big)=\nu^*\big((X\backslash F)\cup (X\backslash G)\big)\leq\nu^*(X\backslash F)+\nu^*(X\backslash G)=0\]
we have $F\cap G\in {\mathscr F}$. Also, if $F\subseteq B$ for some $F\in{\mathscr F}$ and $B\in {\mathscr B}$ (since $X\backslash B\subseteq X\backslash F$) we have
\[\nu^*(X\backslash B)\leq\nu^*(X\backslash F)=0\]
and thus $B\in {\mathscr F}$. Therefore ${\mathscr F}$ is a filter in ${\mathscr B}$. To show that ${\mathscr F}$ is an ultrafilter, let $B\in {\mathscr B}$ be such that $B\cap F$ is non-empty for each $F\in {\mathscr F}$. Since $\mathrm{supp}(\nu)=\emptyset$, for each $x\in X$ there exists some $U_x\in\mathrm{Coz}(X)$ such that $x\in U_x$ and $\nu(U_x)=0$. By compactness of $\mathrm{bd}_XB$ there exist $x_1,\ldots,x_n\in X$ such that
\[\mathrm{bd}_XB\subseteq\bigcup_{i=1}^nU_{x_i}.\]
Let
\[U=\bigcup_{i=1}^nU_{x_i}\in\mathrm{Coz}(X).\]
Then
\[\mathrm{cl}_XB\backslash U\subseteq\mathrm{cl}_XB\backslash\mathrm{bd}_XB\subseteq\mathrm{int}_XB\]
and thus (since we are assuming that $X$ is normal) by the Urysohn Lemma there exists a continuous $f:X\rightarrow[0,1]$ such that
\[f|(X\backslash\mathrm{int}_XB)\equiv0\mbox{ and }f|(\mathrm{cl}_XB\backslash U)\equiv1.\]
Now, if $B\notin {\mathscr F}$ (by the definition of ${\mathscr F}$) we have $\nu^*(X\backslash B)=1$. Thus, since $X\backslash B\subseteq\mathrm{Z}(f)$ we have \[\nu\big(\mathrm{Z}(f)\big)=\nu^*\big(\mathrm{Z}(f)\big)=1\]
and therefore
\[\nu\big(\mathrm{Coz}(f)\big)=1-\nu\big(\mathrm{Z}(f)\big)=0.\]
Now, since
\[B\subseteq\mathrm{cl}_XB\subseteq(\mathrm{cl}_XB\backslash U)\cup U\subseteq\mathrm{Coz}(f)\cup U=\mathrm{Coz}(f)\cup \bigcup_{i=1}^nU_{x_i}\]
we have
\[\nu^*\big(X\backslash(X\backslash B)\big)=\nu^*(B)\leq\nu\big(\mathrm{Coz}(f)\big)+\sum_{i=1}^n\nu(U_{x_i})=0.\]
Therefore (by the definition of ${\mathscr F}$) $X\backslash B\in {\mathscr F}$, which is not possible, as it misses $B$. This contradiction shows that ${\mathscr F}$ is an ultrafilter. It remains to show that ${\mathscr F}$ has c.i.p. But this follows easily, as if $F_1,F_2,\ldots\in{\mathscr F}$, then since
\[\nu^*\Big(X\backslash\bigcap_{i=1}^\infty F_i\Big)=\nu^*\Big(\bigcup_{i=1}^\infty (X\backslash F_i)\Big)\leq\sum_{i=1}^\infty\nu^*(X\backslash F_i)=0\]
we have
\[\bigcap_{i=1}^\infty F_i\in{\mathscr F}.\]
Finally, note that ${\mathscr F}$ is free, as for each $x\in X$ since $\nu(U_x)=0$ (with $U_x$ as defined in the above) we have $X\backslash U_x\in{\mathscr F}$, and thus $x\notin \bigcap{\mathscr F}$.
\end{remark}

\begin{theorem}\label{JJKEDC}
Let $(Y, {\mathscr U},{\mathscr C})$ be a first-countable Hausdorff topological  measurable space. Let $(X, {\mathscr B})\subseteq(Y, {\mathscr C})$ and let $Y\backslash X$ be Lindel\"{o}f. If  there is no free ultrafilter in ${\mathscr B}$ with c.i.p. then there is no free ultrafilter in ${\mathscr C}$ with c.i.p.
\end{theorem}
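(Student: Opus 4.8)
The plan is to imitate the proof of Theorem \ref{JLC}, where $Y\backslash X$ was countable, replacing the fixed enumeration of $Y\backslash X$ used there by a countable family extracted via the Lindel\"{o}f property. Concretely, I would argue by contradiction: assume $\mathscr{H}$ is a free ultrafilter in $\mathscr{C}$ with c.i.p., and aim to manufacture from it a free ultrafilter in $\mathscr{B}$ with c.i.p., contradicting the hypothesis. Inspecting the proof of Theorem \ref{JLC}, the only place countability of $Y\backslash X$ is actually used is to produce countably many members $H_1,H_2,\ldots$ of $\mathscr{H}$ whose intersection misses $Y\backslash X$, i.e.\ with $\bigcap_{n=1}^\infty H_n\subseteq X$; once such a family is in hand, every subsequent step of that proof transfers essentially verbatim. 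So the whole task reduces to securing such a family under the new hypotheses.

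This is where first-countability, Hausdorffness and the Lindel\"{o}f property enter, and it is the step I expect to be the crux. For each $y\in Y\backslash X$ I would fix a countable neighbourhood base $\{V_n^y:n\in\mathbb{N}\}$ at $y$ in $(Y,\mathscr{U})$; since the topology is Hausdorff, the standard argument gives $\bigcap_{n=1}^\infty V_n^y=\{y\}$. Each $V_n^y$ is open, hence lies in $\mathscr{U}\subseteq\mathscr{C}$. I claim some $V_n^y\notin\mathscr{H}$: otherwise all $V_n^y\in\mathscr{H}$, and since c.i.p.\ makes $\mathscr{H}$ $\sigma$-complete, $\{y\}=\bigcap_{n=1}^\infty V_n^y\in\mathscr{H}$, whence $y\in\bigcap\mathscr{H}$, contradicting freeness of $\mathscr{H}$. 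Writing $W_y$ for such a $V_n^y$, the family $\{W_y:y\in Y\backslash X\}$ is an open cover of the subspace $Y\backslash X$; by Lindel\"{o}fness it has a countable subcover $\{W_{y_k}:k\in\mathbb{N}\}$. Setting $H_k=Y\backslash W_{y_k}$, each $H_k\in\mathscr{C}$ and, since $W_{y_k}\notin\mathscr{H}$ and $\mathscr{H}$ is an ultrafilter, $H_k\in\mathscr{H}$; moreover $\bigcap_{k=1}^\infty H_k=Y\backslash\bigcup_{k=1}^\infty W_{y_k}\subseteq X$, exactly the family sought.

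With the $H_k$ secured, I would conclude exactly as in Theorem \ref{JLC}. Set $\mathscr{A}=\{H\cap X:H\in\mathscr{H}\}\subseteq\mathscr{B}$; it is closed under finite intersections, and $\emptyset\notin\mathscr{A}$, since $H\cap X=\emptyset$ would give $H\subseteq Y\backslash X$ and hence $H\cap\bigcap_{k=1}^\infty H_k=\emptyset$, against c.i.p.\ of $\mathscr{H}$. Thus $\mathscr{A}$ is a filter-base; extend it to an ultrafilter $\mathscr{F}$ in $\mathscr{B}$. Then $\mathscr{F}$ is free because $\bigcap\mathscr{F}\subseteq\bigcap\mathscr{A}=(\bigcap\mathscr{H})\cap X=\emptyset$. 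For c.i.p., given $F_1,F_2,\ldots\in\mathscr{F}$ write $F_n=C_n\cap X$ with $C_n\in\mathscr{C}$; one checks $C_n\in\mathscr{H}$ (for each $H\in\mathscr{H}$ the set $F_n\cap(H\cap X)\in\mathscr{F}$ is non-empty, so $C_n\cap H\neq\emptyset$, and $\mathscr{H}$ is an ultrafilter), and then $\bigcap_{n=1}^\infty C_n\cap\bigcap_{k=1}^\infty H_k\subseteq\bigcap_{n=1}^\infty F_n$ with the left side non-empty by c.i.p., forcing $\bigcap_{n=1}^\infty F_n\neq\emptyset$. This contradicts the assumption that $\mathscr{B}$ carries no free ultrafilter with c.i.p. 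The main obstacle is the middle paragraph---coaxing an open neighbourhood outside $\mathscr{H}$ out of first-countability and then compressing the resulting cover via Lindel\"{o}fness---while the outer two paragraphs are a faithful rerun of Theorem \ref{JLC}.
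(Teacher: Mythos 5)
Your proposal is correct and follows essentially the same route as the paper's proof: the paper's Claim 1 produces, for each $y\in Y\backslash X$, a basic neighbourhood $V_{n_y}^y$ disjoint from some member of $\mathscr{H}$ (equivalently, not in $\mathscr{H}$), and its Claim 2 uses the Lindel\"{o}f property to extract a countable subcover and conclude $H\cap X\neq\emptyset$ for all $H\in\mathscr{H}$, after which the argument of Theorem \ref{JLC} is rerun verbatim, exactly as you do. Your packaging via the complements $H_k=Y\backslash W_{y_k}$ with $\bigcap_k H_k\subseteq X$ is only a cosmetic variant, and your restriction of first-countability to the points of $Y\backslash X$ matches the remark the paper makes after its proof.
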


\begin{proof}
For each $y\in Y$, let
\[\{V_n^y:n\in\mathbb{N}\}\]
be an open base at $y$ in $Y$. Suppose to the contrary that there exists a free ultrafilter ${\mathscr H}$ in ${\mathscr C}$ with c.i.p.

\medskip

\noindent {\bf Claim 1.} {\em For  each $y\in Y$ there exist some $n_y\in\mathbb{N}$ and $H_y\in{\mathscr H}$ such that $V_{n_y}^y\cap H_y=\emptyset$.}

\medskip

\noindent {\em Proof of Claim 1.} Suppose the contrary, i.e., suppose that for some $y\in Y$ the set $V_n^y\cap H$ is non-empty for each $n\in\mathbb{N}$ and $H\in {\mathscr H}$. Note that since ${\mathscr H}$ is an ultrafilter in ${\mathscr C}$ this implies that $V_n^y\in{\mathscr H}$ for each $n\in\mathbb{N}$. Since $Y$ is Hausdorff, we have
\[\bigcap_{n=1}^\infty V_n^y=\{y\},\]
and since ${\mathscr H}$ has c.i.p., we have
\[H\cap \{y\}=H\cap\bigcap_{n=1}^\infty V_n^y\neq\emptyset\]
i.e., $y\in H$ for each $H\in {\mathscr H}$, contradicting the fact that ${\mathscr H}$ is free. This shows Claim 1.

\medskip

\noindent {\bf Claim 2.} {\em $H\cap X$ is non-empty for  each $H\in{\mathscr H}$.}

\medskip

\noindent {\em Proof of Claim 2.} Suppose the contrary, i.e., suppose that $H\subseteq Y\backslash X$ for some $H\in {\mathscr H}$. Since
\[Y\backslash X\subseteq\bigcup \{V_{n_y}^y:y\in Y\backslash X\}\]
and $Y\backslash X$ is Lindel\"{o}f, we have
\[Y\backslash X\subseteq \bigcup_{i=1}^\infty V_{n_{y_i}}^{y_i}\]
for some  $y_1,y_2,\ldots\in Y\backslash X$. Now, by Claim 1 we have
\begin{eqnarray*}
H\cap\bigcap_{j=1}^\infty H_{y_j}&\subseteq&\Big(\bigcup_{i=1}^\infty V_{n_{y_i}}^{y_i}\Big)\cap\bigcap_{j=1}^\infty H_{y_j}\\&=&\bigcup_{i=1}^\infty \Big(V_{n_{y_i}}^{y_i}\cap\bigcap_{j=1}^\infty H_{y_j}\Big)\subseteq\bigcup_{i=1}^\infty (V_{n_{y_i}}^{y_i}\cap H_{y_i})=\emptyset
\end{eqnarray*}
contrary to the fact that ${\mathscr H}$ has c.i.p.

\medskip

\noindent Let
\[{\mathscr A}=\{H\cap X:H\in {\mathscr H}\}.\]
Then ${\mathscr A}\subseteq{\mathscr B}$ (as $(X, {\mathscr B})\subseteq(Y, {\mathscr C})$ and ${\mathscr H}\subseteq{\mathscr C}$) and by Claim 2 we have $\emptyset\notin {\mathscr A}$. Since ${\mathscr A}$ is obviously closed under finite intersections, as ${\mathscr H}$ is so, ${\mathscr A}$ is a filter-base in ${\mathscr B}$. Let ${\mathscr F}$ be an ultrafilter in  ${\mathscr B}$ such that ${\mathscr A}\subseteq{\mathscr F}$. Since ${\mathscr H}$ is free, we have
\[\bigcap{\mathscr F}\subseteq\bigcap{\mathscr A}=\bigcap{\mathscr H}\cap X=\emptyset\]
i.e., ${\mathscr F}$ also is free. To show that ${\mathscr F}$ has c.i.p., let $F_1,F_2,\ldots\in{\mathscr F}$. Let $F_n=C_n\cap X$ where $C_n\in{\mathscr C}$ for each $n\in\mathbb{N}$. For each $n\in\mathbb{N}$ and $H\in {\mathscr H}$, since $H\cap X\in {\mathscr A}\subseteq{\mathscr F}$, we have
\[\emptyset\neq H\cap X\cap F_n= H\cap X\cap C_n\subseteq H\cap C_n\]
and thus, since ${\mathscr H}$ is an ultrafilter in  ${\mathscr C}$, we have $C_n\in {\mathscr H}$. Now, for each $H\in{\mathscr H}$, since ${\mathscr H}$ has c.i.p., we have
\[H\cap \bigcap_{n=1}^\infty C_n\neq\emptyset,\]
and therefore
\[\bigcap_{n=1}^\infty C_n\in{\mathscr H}.\]
By Claim 2 we have
\[\bigcap_{n=1}^\infty F_n=\bigcap_{n=1}^\infty C_n\cap X\neq\emptyset.\]
This shows that ${\mathscr F}$ is a free ultrafilter in ${\mathscr B}$ with c.i.p., which is a contradiction.
\end{proof}

Note that in the above proof  we only need $Y$ to be first-countable at the points of $Y\backslash  X$.

\section{Examples of  measure spaces $(X,{\mathscr B},\mu)$ with an arbitrarily large number of free ultrafilters in ${\mathscr B}$ having c.i.p.}

\begin{example}\label{JJEDC}
Let $\zeta$ be a cardinal. Then there exists a measure space $(Z,{\mathscr D},\nu)$ having at least $\zeta$ free ultrafilters in ${\mathscr D}$ with c.i.p.
\end{example}

\begin{proof}
Let  $(X, {\mathscr B},\mu)$ be a measure space in which $\mu$ is a non-trivial $\{0,1\}$-valued measure which (is defined and) vanishes at singletons. Let $(Y,{\mathscr C},\lambda)$ be a $\sigma$-finite measure space in which singletons are measurable and such that $\mathrm{card}(Y)\geq\zeta$. By Proposition \ref{JJHHC} there exists a free ultrafilter ${\mathscr F}$ in ${\mathscr B}$ with  c.i.p. To see this, simply let
\[{\mathscr A}=\big\{\{x\}:x\in X\big\}\]
and observe that (since $\mu$ is non-trivial)
\[\mu\Big(\bigcup{\mathscr A}\Big)=\mu(X)=1\neq 0=\sup_{x\in X}\mu (x)=\sup_{A\in{\mathscr A}}\mu (A).\]
For each $y\in Y$, let ${\mathscr H}_y$ be an  ultrafilter in  ${\mathscr B}\times {\mathscr C}$ such that
\[\big\{F\times\{y\}:F\in{\mathscr F}\big\}\subseteq{\mathscr H}_y.\]
By the proof of Theorem \ref{JLFC} the ultrafilter ${\mathscr H}_y$, for each $y\in Y$, is free and has c.i.p. Note that ${\mathscr H}_y$'s are distinct if $y\in Y$ are distinct. The measure space $(X\times Y,{\mathscr B}\times{\mathscr C},\mu\times\lambda)$ has the desired property.
\end{proof}

\section{Questions}

We conclude this article with the following questions.

\begin{question}\label{IU}
In Theorem \ref{JEDC}, does the converse hold? More precisely, for a first-countable topological  measurable space $(X,{\mathscr O}, {\mathscr B})$ does the non-existence of any free ultrafilter in ${\mathscr B}$ with c.i.p. imply its realcompactness?
\end{question}

\begin{question}\label{SIU}
It is known that each finite measure space can be embedded in a perfect measure space. Even more, each finite measure space can be embedded in a compact (in the sense of  Marczewski) measure space. Find the corresponding choices of ${\mathbb U}$, $S_{\mathscr U}$ where ${\mathscr U}\in {\mathbb U}$, $(Z,{\mathscr D})$ and $\{{\mathscr D}_B:B\in {\mathscr B}\}$ in Theorem \ref{FDDDB} for any such embeddings.
\end{question}

\begin{center}
{\textbf{Acknowledgments}}
\end{center}

The author wishes to express his profound gratitude to the anonymous referee for his exceptionally careful reading of the article and his comments and suggestions which helped modifying the exposition of the article.

\end{document}